\newcommand{\R}{\mathbb {R}}
\newcommand{\I}{\infty}
\newcommand{\grad}{\nabla}
\newcommand{\bn}{\mathbb{B}^{N}}
\newcommand{\rn}{\mathbb{R}^{N}} 
\newcommand{\hn}{\mathbb{H}^{N}}
\newtheorem{theorem}{Theorem}[section]
\newtheorem{lemma}[theorem]{Lemma}
\newtheorem{proposition}[theorem]{Proposition}
\newtheorem{remark}[theorem]{Remark}
\newtheorem{corollary}[theorem]{Corollary}
\numberwithin{equation}{section}
\newcommand{\be} {\begin{equation}}
\newcommand{\ee} {\end{equation}}
\newcommand{\bea} {\begin{eqnarray}}
\newcommand{\eea} {\end{eqnarray}}
\newcommand{\Bea} {\begin{eqnarray*}}
\newcommand{\Eea} {\end{eqnarray*}}
\newcommand{\al} {\alpha}
\newcommand{\ba} {\beta}
\newcommand{\de} {\delta}
\newcommand{\De} {\Delta}
\newcommand{\la} {\lambda}
\newcommand{\na} {\nabla}
\newcommand{\var} {\varepsilon}
\title{Sign changing solutions of the Brezis-Nirenberg problem in the Hyperbolic space.}
\author{Debdip Ganguly and K. Sandeep\footnote{TIFR Centre for Applicable Mathematics, Post Bag No. 6503
 Sharadanagar,Chikkabommasandra, Bangalore 560065. Email:debdip@math.tifrbng.res.in; sandeep@math.tifrbng.res.in}}
\date{}
\begin{document}

\maketitle
\begin{abstract} In this article we will study the existence and nonexistence of sign changing solutions
for the Brezis-Nirenberg type problem in the Hyperbolic space. We will also establish sharp asymptotic estimates for the solutions and 
the compactness properties of solutions.

\end{abstract}
\section{Introduction}

In this article we will study the equation
\begin{equation} \label{E:1.1}
 -\De_{\bn} u - \la u = |u|^{2^{*}-2} u,   u \in H^{1}(\bn)
\end{equation}
where $\la < (\frac{N-1}{2})^{2}$ and $H^{1}(\bn)$ denotes the Sobolev
space on the disc model of the Hyperbolic space $\bn$, $\De_{\bn}$ denotes the Laplace Beltrami operator on $\bn$ 
(see the Appendix for definitions) and $2^* = \frac{2N}{N-2}$
is the critical Sobolev exponent and $N\ge 3$. \\\\
Though equation \eqref{E:1.1} is a natural generalization of the well known Brezis-Nirenberg equation (\cite{BN})
to the Hyperbolic space, it came to prominense with the discovery of its connection with various other
equations like Hardy-Sobolev-Mazya equations(\cite{CFMS1},\cite{CFMS2},\cite{MS}) and Grushin equations(\cite{B}).
Existence and uniqueness of positive finite energy solutions to \eqref{E:1.1} has been thoroughly investigated
in \cite{MS}, in fact for the general nonlinearity $|u|^{p-2} u$ with $2<p\le \frac{2N}{N-2}$ for $N\ge 3$ and $p>1$ for N=2.
 It is shown in \cite{MS}
that \eqref{E:1.1} has a positive solution iff $\frac{N(N-2)}{4}<\la < (\frac{N-1}{2})^{2}, N\ge 4$ and the solution is unique 
up to hyperbolic isometry. The problem also exhibits a low dimensional phenomenon(nonexistence of positive solution 
for $N=3$ for any $\la$ ), which also implies that the best constant in the Sobolev inequality in the 3-dimensional hyperbolic space
is the same as the corresponding one in the Euclidean space (\cite{BFL}). 
Existence and nonexistence of positive solutions to the above problem in geodesic balls of the hyperbolic space have been 
studied in \cite{S}.
 
In this article we mainly dicuss the sign changing solutions of \eqref{E:1.1}. In the subcritical case, i.e.,when the nonlinear term is 
$|u|^{p-2} u$ with $2<p< \frac{2N}{N-2}$, the problem admits infinitely many sign changing solutions (\cite{BS-2}) for any
$\la < (\frac{N-1}{2})^{2}$. It is also shown in \cite{BS-2} that \eqref{E:1.1} has a pair of radial sign changing solution when $N\ge 7.$
Radial sign changing solutions of \eqref{E:1.1} without the finite energy assumption for the case $\la=0$ has been studied in \cite{BGG}.
So many questions remains unanswered in the critical case. First of all is the restriction on $\la$ for the existence of a positive solution
 is required for the existence of sign changing solutions as well ? One may expect so as the condition is coming from a Pohozaev obstruction
which is applicable to sign changing solutions as well. However we can not apply directly the Pohozaev identity as we do not know the 
behaviour of solutions near infinity. We establish asymptotic estimates for the solutions (see Theorem \ref{T:2.1}) and prove:
\begin{theorem}\label{T:1.1}
 The Eq.(\ref{E:1.1}) does not have a solution if $\la \leq \frac{N(N-2)}{4}$.
\end{theorem}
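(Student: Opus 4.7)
The plan is a conformal reduction followed by a Pohozaev-type obstruction, with Theorem~\ref{T:2.1} controlling the boundary contributions. The threshold \(N(N-2)/4 = -\tfrac{N-2}{4(N-1)}R_{\bn}\) is precisely the ``conformal mass'' associated with the scalar curvature of \(\bn\); subtracting it flattens the equation. Concretely, in the ball model \(g_{\bn}=\rho^{2}g_{\mathrm{Eucl}}\) with \(\rho=2/(1-|x|^{2})\), the substitution \(w=\rho^{(N-2)/2}u\), combined with the conformal covariance of the conformal Laplacian (and \(R_{\bn}=-N(N-1)\)), turns (\ref{E:1.1}) into
\[
-\De w + c\,\rho^{2} w = |w|^{2^{*}-2}w \quad\text{on } B=\{|x|<1\},\qquad c:=\tfrac{N(N-2)}{4}-\la \geq 0.
\]
The potential \(V:=c\rho^{2}\) is nonnegative, radial, and strictly increasing in \(|x|\) when \(c>0\).

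For the boundary analysis I use Theorem~\ref{T:2.1}, which should produce a pointwise bound \(|u|\lesssim(1-|x|)^{\sigma_{+}}\) with \(\sigma_{+}=\tfrac{N-1}{2}+\tfrac12\sqrt{(N-1)^{2}-4\la}\) the fast indicial root of \(-\De_{\bn}-\la\) at the conformal infinity (the slow root is excluded by \(u\in H^{1}\)). Composing with \(\rho^{(N-2)/2}\sim(1-|x|)^{-(N-2)/2}\) gives \(w=O((1-|x|)^{\alpha})\) with \(\alpha:=\sigma_{+}-\tfrac{N-2}{2}\geq 1\), equality exactly at \(\la=N(N-2)/4\). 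Standard boundary elliptic regularity on the linear equation for \(w\) promotes this to \(|\nabla w|=O((1-|x|)^{\alpha-1})\). These estimates are just sharp enough to kill all Pohozaev boundary integrals---\(\int_{\partial B_R}|w|^{2^{*}}\), \(\int_{\partial B_R}V w^{2}\), \(\int_{\partial B_R}|\nabla w|^{2}\), \(\int_{\partial B_R}w\,\partial_{\nu}w\), and \(\int_{\partial B_R}(x\cdot\nabla w)\partial_{\nu}w\)---as \(R\uparrow 1\) in the strict case \(\la<N(N-2)/4\); in the endpoint, the linear decay still places \(w\in H^{1}_{0}(B)\).

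Then I apply Pohozaev. Testing the Euclidean equation against \(x\cdot\nabla w+\tfrac{N-2}{2}w\) on \(B_{R}\), exploiting conformal invariance of the critical exponent to cancel the \(|w|^{2^{*}}\)-piece in the bulk, and sending \(R\to 1\), the identity reduces to
\[
\int_{B}\!\Bigl(V+\tfrac12\,x\cdot\nabla V\Bigr)w^{2}\,dx=0.
\]
A direct computation gives \(V+\tfrac12\,x\cdot\nabla V=4c(1+|x|^{2})/(1-|x|^{2})^{3}\), strictly positive on \(B\) whenever \(c>0\), forcing \(w\equiv 0\) and hence \(u\equiv 0\) for \(\la<N(N-2)/4\). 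In the endpoint \(c=0\), \(w\) solves \(-\De w=|w|^{2^{*}-2}w\) in \(H^{1}_{0}(B)\); the classical Pohozaev identity on the star-shaped unit ball then gives \(\int_{\partial B}|\partial_{\nu}w|^{2}=0\), so \(w\) has vanishing Cauchy data on \(\partial B\), and unique continuation yields \(w\equiv 0\).

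The technical crux is the boundary matching of the second step: at \(\la=N(N-2)/4\) the margin \(\alpha=1\) is exactly critical, so each boundary integral must be tracked against the singular weight in \(V\sim(1-|x|)^{-2}\) and in the conformal factor with no room to spare. The companion gradient estimate on \(w\) demands an additional layer of boundary elliptic regularity for the (linear) conformally flattened equation, and it is this bookkeeping that really carries the theorem.
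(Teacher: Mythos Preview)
Your approach is correct and coincides with the paper's: conformal flattening $w=\rho^{(N-2)/2}u$, the decay from Theorem~\ref{T:2.1}, and a Pohozaev identity that collapses to $\int_{B}4c\,(1+|x|^{2})(1-|x|^{2})^{-3}\,w^{2}\,dx=0$, with the endpoint $c=0$ handled by the classical star-shaped obstruction. The only substantive difference is in the boundary bookkeeping: the paper never proves a \emph{pointwise} gradient bound but instead uses smooth cutoffs $\varphi_{\varepsilon}$ and an \emph{integrated} estimate $\int_{1-2\varepsilon<|x|<1-\varepsilon}|\nabla w|^{2}=O(\varepsilon^{\alpha})$ with $\alpha>1$ (Proposition~\ref{P:3.2}, obtained simply by testing the equation against $\psi_{\varepsilon}w$); this is the safer route, since the singular potential $c\rho^{2}\sim c(1-|x|)^{-2}$ blocks any direct appeal to ``standard boundary elliptic regularity''---your pointwise estimate $|\nabla w|=O((1-|x|)^{\alpha-1})$ is true, but requires rescaled interior Schauder estimates on balls of radius $\sim 1-|x|$, not boundary theory. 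One minor correction: Theorem~\ref{T:2.1} actually gives decay exponent $c_{\lambda}=\min\{\sigma_{+},(N+2)/2\}$ rather than $\sigma_{+}$, so $\alpha=\min\{\sigma_{+}-(N-2)/2,\,2\}$; this still yields $\alpha>1$ for $\lambda<N(N-2)/4$ and $\alpha=1$ at the endpoint, so your argument is unaffected.
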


There has been an extensive study of the Brezis-Nirenberg problem in the past two decades in bounded domains of the Euclidean space
and also on compact Riemannian manifolds(see \cite{So},\cite{Sz}, \cite{JV} and
 the references therein). One of the important result obtained is the existence of infinitely many sign changing solutions
when the dimension $N\geq 7$ (\cite{So},\cite{Sz}). In all these approaches one of the main tool used is the compactness of the 
Brezis-Nirenberg problem established by Solimini(\cite{So}) in higher dimensions. \\
In the hyperbolic case, we prove the following compactness theorem for radial solutions:
\begin{theorem} 
 Let $N \geq 7$ and $\cal{A}$ be a bounded subset of $H^{1}(\bn)$ consisting of radial solutions of \eqref{E:5.2}
for a fixed $\lambda$ and $p$ varying in $(2,2^*]$, then there exists a constant $C$ depending only on $\cal{A}$
such that
\begin{equation}
  |u(x)| \leq C (1-|x|^{2})^{\frac{N-1}{2}}
\end{equation} 
holds for all $u \in \cal{A}.$ 
\end{theorem}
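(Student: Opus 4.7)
The strategy is a two-step argument: first establish a uniform $L^{\infty}$ bound on $\mathcal{A}$ by ruling out blow-up, then upgrade this to the pointwise decay via a barrier/comparison argument built on the function $W(x):=(1-|x|^{2})^{(N-1)/2}$.

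For the $L^{\infty}$ bound I would argue by contradiction. Suppose $u_n \in \mathcal{A}$ with exponents $p_n \in (2,2^{*}]$ satisfy $M_n := \|u_n\|_{\infty} = |u_n(y_n)| \to \infty$. Perform the standard critical rescaling at $y_n$: set $\mu_n = M_n^{-(p_n-2)/2}$ and $v_n(z) = M_n^{-1} u_n(\exp_{y_n}(\mu_n z))$ in hyperbolic normal coordinates. Since $\mu_n \to 0$, the rescaled metrics converge to the Euclidean one, and after passing to a subsequence with $p_n \to p_{\infty}$, the $v_n$ converge locally to a bounded nontrivial solution $V$ of $-\De V = |V|^{p_{\infty}-2} V$ on $\rn$. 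For $2 < p_{\infty} < 2^{*}$, the Pohozaev identity forbids such a finite-energy $V$; for $p_{\infty} = 2^{*}$ one obtains a standard Aubin--Talenti bubble carrying a universal amount of Dirichlet energy. In dimension $N \ge 7$, Solimini's compactness machinery then shows that only finitely many such bubbles can form before the $H^{1}$-budget of $\mathcal{A}$ is exhausted, and the radial symmetry together with the tail decay from Theorem \ref{T:2.1} forces all concentration onto a single orbit, producing the contradiction.

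With a uniform bound $\|u\|_{\infty} \le K$ on $\mathcal{A}$, the equation gives $-\De_{\bn}|u| \le (\la + K^{p-2})|u|$, and using Theorem \ref{T:2.1} to restrict to a neighborhood of infinity where $|u|$ is small, this sharpens to $-\De_{\bn}|u| \le \La|u|$ with $\La < \bigl(\tfrac{N-1}{2}\bigr)^{2}$ uniform over $u \in \mathcal{A}$. A direct computation in the disc model yields $-\De_{\bn} W = \tfrac{(N-1)^{2}}{4} W + \text{l.o.t.}$ as $|x|\to 1$, so $AW$ is a positive supersolution of the linear inequality for $|u|$ in an annular region $\{r_0 < |x| < 1\}$ once $A$ is chosen large. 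Picking $r_0$ and $A$ uniformly on $\mathcal{A}$ (the uniform $L^{\infty}$-bound controls $|u|$ on $\{|x|=r_0\}$) and applying the weak maximum principle yields $|u| \le AW$ there; on $\{|x| \le r_0\}$, $W$ is bounded below, so after enlarging $A$ the estimate extends to all of $\bn$.

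The main obstacle is the first step: excluding concentration for the full range $p_n \in (2, 2^{*}]$ and for possibly diverging concentration points $y_n \to \pa \bn$. In the latter scenario one must work in coordinates adapted to the conformal boundary and match the inner Euclidean-bubble profile with the exterior decay $W$ prescribed by Theorem \ref{T:2.1}. The dimension hypothesis $N \ge 7$ is indispensable: it is what makes the critical bubble profile square-integrable away from its center, which is precisely what allows Solimini's compactness argument to close and the above energy-counting to be effective.
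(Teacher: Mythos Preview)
Your Step~2 is unnecessary. The paper already records (Lemma~\ref{T:5.1}) that any bounded set in $H^{1}_{r}(\bn)$ satisfies $|u(x)|\le C|x|^{-N/2}(1-|x|^{2})^{(N-1)/2}$; this is a purely $H^{1}$ radial estimate, valid regardless of the equation. Hence the decay $(1-|x|^{2})^{(N-1)/2}$ is already uniform on $\mathcal{A}$ away from the origin, and the theorem reduces to a uniform $L^{\infty}$ bound on, say, $\{|x|\le 1/2\}$. Your barrier argument is a detour; moreover, your appeal to Theorem~\ref{T:2.1} for tail control is awkward since its quantitative part requires $\lambda\le N(N-2)/4$, the wrong regime. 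The same radial lemma also immediately forces any blow-up point to be the origin, so your concern about $y_{n}\to\partial\bn$ never arises.

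The real gap is in Step~1. You correctly reach the bubble decomposition, but then write that $N\ge 7$ plus ``Solimini's machinery'' and an energy count ``produce the contradiction'', without saying what the contradiction is. Energy quantisation limits the number of bubbles in every dimension; it does not by itself forbid a single bubble from forming. The actual obstruction---both in the paper and in Devillanova--Solimini for Euclidean domains---comes from a \emph{local Pohozaev identity} on a ball $B_{n}$ of radius $\sim\sqrt{\epsilon_{n}}$ around the concentration point. After passing to the conformal Euclidean picture $v_{n}=(2/(1-|x|^{2}))^{(N-2)/2}u_{n}$, the lower-order term $\tilde\lambda\int_{B_{n}}(2/(1-|x|^{2}))^{2}v_{n}^{2}$ contributes at least $c\,\epsilon_{n}^{2}$ (the $L^{2}$-mass of a bubble on a fixed ball), while the boundary terms are controlled by $C\,\epsilon_{n}^{(N-2)/2}$ using Solimini's gradient estimate on $\partial B_{n}$. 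This yields $\epsilon_{n}^{2}\le C\,\epsilon_{n}^{(N-2)/2}$, which is impossible precisely when $(N-2)/2>2$, i.e.\ $N\ge 7$. Your stated reason for the dimensional threshold (``square-integrability of the bubble away from its center'') is not the mechanism; what matters is the comparison of the interior $L^{2}$ contribution against the Pohozaev boundary flux.
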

With the help of above theorem we prove:
\begin{theorem}\label{T:1.3}
The Eq.(\ref{E:1.1})has infinitely many non-trivial radial sign changing solutions if
 $N \geq 7$ and $\frac{N(N-2)}{4} < \la < (\frac{N-1}{2})^{2}$.
\end{theorem}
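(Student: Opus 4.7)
The plan is to obtain the infinitely many radial sign-changing solutions by a subcritical approximation argument, in the spirit of Devillanova-Solimini's treatment of the Euclidean Brezis-Nirenberg problem, with Theorem~1.2 playing the role of Solimini's compactness result.

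\textbf{Step 1 (Subcritical min-max).} For each $p\in(2,2^*)$ the subcritical counterpart of \eqref{E:1.1} has a variational formulation on $H^{1}_{\mathrm{rad}}(\bn)$ whose functional satisfies the Palais-Smale condition, because $H^{1}_{\mathrm{rad}}(\bn)\hookrightarrow L^{p}(\bn)$ is compact for $p<2^*$ (Strauss-type radial compactness on $\bn$). A Krasnoselskii genus / fountain theorem scheme, as used in \cite{BS-2}, produces a sequence of critical values $c_{p,k}$, $k\geq 1$, with associated radial sign-changing solutions $u_{p,k}$, and $c_{p,k}\to\I$ as $k\to\I$ uniformly for $p$ in compact subsets of $(2,2^{*}]$.

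\textbf{Step 2 (Uniform $H^{1}$-bound as $p\uparrow 2^*$).} For fixed $k$, construct $p$-independent admissible sets in the min-max scheme (truncations of a finite-dimensional subspace of smooth radial functions); the corresponding sup of the functional stays finite as $p\uparrow 2^*$, giving $\sup_{p}c_{p,k}<\I$. Since $\la<\bigl(\tfrac{N-1}{2}\bigr)^{2}$, the quadratic form $\int_{\bn}(|\na u|^{2}-\la u^{2})\,dV_{\bn}$ is equivalent to $\|u\|_{H^{1}(\bn)}^{2}$ (by the Poincaré inequality on $\bn$), and combining the energy identity with the Pohozaev-free manipulation $\tfrac{1}{2}\langle E'(u_{p,k}),u_{p,k}\rangle$ yields a uniform $H^{1}$-bound. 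Thus $\mathcal{A}_{k}:=\{u_{p,k}:p\in(2,2^{*}]\}$ is a bounded subset of $H^{1}(\bn)$.

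\textbf{Step 3 (Passing to the critical limit).} Apply Theorem~1.2 to $\mathcal{A}_{k}$: since $N\geq 7$, one obtains the pointwise bound $|u_{p,k}(x)|\leq C_{k}(1-|x|^{2})^{(N-1)/2}$, uniformly in $p$. This furnishes an $L^{q}$-envelope for $|u_{p,k}|^{p-1}$ that is integrable for every $q\geq 1$, independent of $p$. Along a sequence $p_{n}\uparrow 2^*$, dominated convergence converts the weak $H^{1}$-limit $u_{k}$ into a strong limit in $L^{2^*}(\bn)$; passing to the limit in the weak formulation shows $u_{k}$ solves \eqref{E:1.1}. Elliptic regularity then gives $u_{p_{n},k}\to u_{k}$ in $C^{2}_{\mathrm{loc}}(\bn)$, so $u_{k}$ inherits the nodal structure of $u_{p_{n},k}$ for large $n$: it changes sign and is non-trivial. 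Because $c_{p_{n},k}\to E(u_{k})$ and $c_{p,k}\to\I$ as $k\to\I$, the family $\{u_{k}\}_{k\geq 1}$ contains infinitely many geometrically distinct solutions.

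\textbf{Main obstacle.} The delicate point is Step~2, namely showing that the min-max levels $c_{p,k}$ remain bounded as $p\uparrow 2^*$; this is precisely where the restriction $N\geq 7$ plays its role in the Euclidean setting, and here one must run the analogous energy estimate using hyperbolic test functions, exploiting the fact that the "mass'' coming from the subcritical perturbation dominates the Aubin-Talenti concentration cost when $N\geq 7$. Once Step~2 is in hand, the compactness provided by Theorem~1.2 and the pointwise decay reduce the critical problem to a routine dominated-convergence argument, with no further concentration-compactness analysis required.
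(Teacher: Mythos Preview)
Your overall architecture---build radial solutions of the subcritical problem, bound them uniformly in $H^{1}$, then invoke Theorem~1.2 to pass to the critical limit---is exactly the paper's route, and Steps~2 and~3 are essentially correct. But there is a genuine gap in how you conclude that \emph{infinitely many} of the limits $u_{k}$ are distinct.

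In Step~1 you assert that $c_{p,k}\to\infty$ as $k\to\infty$ \emph{uniformly for $p$ in compact subsets of $(2,2^{*}]$}, and in Step~3 you rely on this to get $E(u_{k})\to\infty$. That uniformity does not follow from any standard genus or fountain argument: the usual proof that the levels diverge is by contradiction via Palais--Smale (if the $c_{p,k}$ accumulate at a finite level, the critical set there has infinite genus), which is non-quantitative and collapses precisely at $p=2^{*}$. No $p$-independent lower bound for $c_{p,k}$ tending to infinity with $k$ is available. The paper sidesteps this by replacing the fountain scheme with the Schechter--Zou abstract theorem, which produces subcritical solutions $u^{n}_{k}$ carrying the additional information that the \emph{augmented Morse index} of $u^{n}_{k}$ on $H^{1}_{r}(\bn)$ is at least $k$. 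Multiplicity then goes as follows: if the limiting energies $C(\la,k)$ were bounded, one could extract a diagonal sequence $\{u^{n_{k}}_{k}\}$ bounded in $H^{1}_{r}(\bn)$, apply Theorem~1.2 to obtain a uniform $L^{\infty}$ bound with decay, and hence a uniform bound on the Morse index of the linearised operator---contradicting index $\geq k$. So it is the Morse-index control, not a uniform energy lower bound, that separates the solutions.

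Two smaller corrections. Your ``Main obstacle'' paragraph misplaces the hypothesis $N\geq 7$: it enters \emph{only} through the compactness theorem (via a local Pohozaev identity at blow-up scale), not through the upper bound on $c_{p,k}$, which is obtained exactly as you sketch in Step~2 and holds in every dimension. And the claim that $u_{k}$ inherits the nodal structure of $u_{p_{n},k}$ from $C^{2}_{\mathrm{loc}}$ convergence is incomplete, since the negative part could in principle shrink to zero in the limit; the paper instead invokes the uniqueness of the positive radial solution of \eqref{E:1.1} to conclude that all but possibly one of the $u_{k}$ must change sign.
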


We divide the paper in to four sections. In Section 2, we will prove the asymptotic estimates on the solutions, Section 3
is devoted to the compactness properties, Sections 4 and 5 will respectively prove the nonexistence and existence results and in section
6, we recall the definitions and embeddings of Sobolev spaces on the hyperbolic space.\\
{\bf Notations.} We will denote by $H^{1}(\bn)$ the Sobolev space with respect to the hyperbolic metric and 
$H^{1}_0(\bn)$ will denote the Euclidean Sobolev space on the unit disc. We will denote the hyperbolic volume by $dV_{\bn}$.

\section{Asymptotic estimates}
From the standard elliptic theory we know that the solutions of (\ref{E:1.1}) are in
$C^{2}(\bn)$. But we do not have any information 
on the nature of solutions as $x\rightarrow \I$ (equivalently as $|x| \rightarrow
1$). If \emph{u} is a positive solution of (\ref{E:1.1})
,by moving plane method \emph{u} is radial with respect to a point and the exact
behaviour of $\emph{u}$ as $x \rightarrow \I$ has been obtained 
in \cite{MS} by analysing the corresponding ode. But there is no reason to expect every solution
to be radial(especially the sign changing ones) and hence the above mentioned approach does not help in finding 
apriori estimates in the general case. In this section we will prove the following asymptotic estimate
which plays a major role in the proof of Theorem \ref{T:1.1}.\\
\begin{theorem}\label{T:2.1}
 Let u be a solution of (\ref{E:1.1}),then $|u(x)| + |\grad_{\bn} u(x)|^{2}
\rightarrow 0$ as $x \rightarrow \I$ in $\bn$. If 
$\la \leq \frac{N(N-2)}{4} $ then
 $|u(x)| \leq C \left(\frac{1 -|x|^{2}}{2}\right)^{c_\la}$ where $c_\la = \min
\{\frac{(N-1)+\sqrt{(N-1)^{2}- 4\la}}{2},\frac{N+2}{2}\}$.
\end{theorem}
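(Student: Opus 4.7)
The theorem splits into (i) vanishing of $|u|$ and $|\grad_\bn u|^2$ at infinity and (ii) the quantitative decay bound. For (i), I would first upgrade $u\in H^1(\bn)$ to $u\in L^\infty(\bn)$: since $u\in L^{2^*}(\bn)$ by the Sobolev embedding, the coefficient $|u|^{2^*-2}$ lies in $L^{N/2}(\bn)$, so a Brezis--Kato iteration yields $u\in L^p(\bn)$ for all $p<\infty$, and a local Moser / De Giorgi--Nash estimate on small hyperbolic balls (made uniform via the transitive isometry group action) gives $u\in L^\infty(\bn)$. To produce the pointwise vanishing, given any $x_n\to\infty$ in $\bn$ I would translate by an isometry $T_n$ sending $x_n$ to the origin and consider $u_n:=u\circ T_n^{-1}$; these solve the same equation and are uniformly bounded in $H^1\cap L^\infty$, so along a subsequence $u_n\rightharpoonup w$ weakly in $H^1$ and strongly in $C^1_{\mathrm{loc}}$ by interior elliptic regularity. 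Because $\int_\bn|u|^{2^*}\,dV_\bn<\infty$, the $L^{2^*}$-tail $\int_{T_n^{-1}(B_\bn(0,R))}|u|^{2^*}\,dV_\bn$ vanishes in the limit, forcing $w\equiv 0$; hence $u(x_n)=u_n(0)\to 0$, and the same argument applied to the first derivatives gives $|\grad_\bn u(x_n)|^2\to 0$.

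For (ii), assuming $\la\le N(N-2)/4$ I would pass to the Euclidean picture via $v:=\bigl(\tfrac{2}{1-|x|^2}\bigr)^{(N-2)/2}u$. The conformal covariance of the conformal Laplacian (using that the scalar curvature of $\bn$ is $-N(N-1)$) yields
\[
-\De v+\frac{4A}{(1-|x|^2)^2}\,v=|v|^{2^*-2}v\quad\text{in }B_1,\qquad A:=\frac{N(N-2)}{4}-\la\ge 0,
\]
and the target bound $|u|\le C(1-|x|^2)^{c_\la}$ translates to $|v|\le C(1-|x|^2)^{c_\la-(N-2)/2}$. The Frobenius indicial equation at $\pa B_1$ is $\si(\si-1)=A$, whose decaying root is $\si_+=\tfrac{1+\sqrt{(N-1)^2-4\la}}{2}$; this corresponds to the rate $\al_+:=\si_++(N-2)/2=\tfrac{(N-1)+\sqrt{(N-1)^2-4\la}}{2}$ on $u$.

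The decay is then realised by a maximum-principle comparison on the exterior region $\{d_\bn(0,\cdot)>R_0\}$, where $R_0$ is chosen so that $|u|^{2^*-2}<\eps$; Kato's inequality $\De|u|\ge \mathrm{sgn}(u)\De u$ converts the nonlinear equation to the linear sub-solution inequality $-\De_\bn|u|-(\la+\eps)|u|\le 0$. Two super-solutions account for the two branches of $c_\la$. In the \emph{linear regime} $\la\ge (N-4)(N+2)/4$ (equivalently $\al_+\le(N+2)/2$), the radial function $w_1:=M(1-|x|^2)^{\al_+-\eta}$ satisfies $-\De_\bn w_1-(\la+\eps)w_1\ge 0$ near infinity for each small $\eta>0$, because the asymptotic indicial polynomial $c^2-(N-1)c+\la$ is strictly negative at $c=\al_+-\eta$; choosing $M$ large enough that $w_1\ge|u|$ on $\{d_\bn=R_0\}$ and letting $\eta\downarrow 0$ gives the rate $\al_+$. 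In the complementary \emph{nonlinear regime} $\la\le (N-4)(N+2)/4$ (equivalently $A\ge 2$ and $\al_+\ge(N+2)/2$), I would use the explicit Euclidean super-solution $w_2:=M(1-|x|^2)^2$ for $v$: a direct computation gives
\[
-\De w_2+\frac{4A}{(1-|x|^2)^2}w_2=4M\bigl[A+N-(N+2)|x|^2\bigr]\ge 4M(A-2)\ge 0,
\]
while $|v|^{2^*-2}v=O((1-|x|^2)^{2(N+2)/(N-2)})$ is of much higher order near $\pa B_1$, so comparison on an annulus $\{r_0<|x|<1\}$ with $M$ chosen to majorise $v$ on $\{|x|=r_0\}$ yields $|v|\le M(1-|x|^2)^2$, i.e., $|u|\le C(1-|x|^2)^{(N+2)/2}$. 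Combining both cases gives $c_\la=\min\{\al_+,(N+2)/2\}$.

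The main technical difficulty is the comparison step for sign-changing solutions, since $w_1$ and $w_2$ are positive while $u$ and $v$ need not be. I would handle this uniformly by applying all comparisons to $|u|$ (resp.\ $|v|$) via Kato's inequality, reducing to a purely linear inequality with a potential perturbation of order $\eps=O(\|u\|_{L^\infty(d_\bn>R_0)}^{2^*-2})$; the positive singular weight $4A/(1-|x|^2)^2$ in the Euclidean picture (equivalently the fact that $\la<(N-1)^2/4$ lies strictly below the bottom of the spectrum of $-\De_\bn$) provides a genuine weak maximum principle for the linear operator, and $\eps\to 0$ as $R_0\to\infty$ (by part (i)) ensures that the nonlinear correction does not upset the comparison on a suitably chosen exterior region.
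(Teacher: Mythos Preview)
Your approach is genuinely different from the paper's and largely sound, but there is one real gap.

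For part (i) your translation--compactness argument via the hyperbolic isometry group is clean and correct; the paper does not give a separate argument for this assertion (its explicit proof only treats the regime $\la\le N(N-2)/4$, where the vanishing follows from the quantitative bound).

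For part (ii) the paper takes a completely different route: it passes to the \emph{upper half-space} model, sets $v=x_N^{-(N-2)/2}\tilde u$, and proves via Moser iteration and difference quotients that $v$, $v_{x_i}$ and $v_{x_ix_i}$ (tangential $i$) are locally bounded up to $\{x_N=0\}$. Freezing $x'$ then reduces the PDE to the ODE $-v''+\eta v/r^2=f(r)$ with $f=\sum_{i<N}v_{x_ix_i}+|v|^{2^*-2}v$ bounded, which is solved \emph{exactly} by variation of parameters, yielding $|v|\le C_1r^{m_1}+C_2r^2$ with the sharp exponent $m_1$; a Kelvin transform globalises this. Your barrier strategy in the ball model is more elementary in spirit and avoids the second-derivative regularity, but it still needs one ingredient you did not mention: you must know $v\in L^\infty(B_1)$ (not merely $u\in L^\infty(\bn)$, which only gives $|v|\lesssim(1-|x|^2)^{-(N-2)/2}$). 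This requires a Moser iteration up to the singular boundary exploiting the good sign of $4A/(1-|x|^2)^2$, i.e.\ exactly the analogue of the paper's Proposition~\ref{P:2.2}.

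The genuine gap is the step ``letting $\eta\downarrow 0$ gives the rate $\alpha_+$'' in your linear regime. Your barrier $w_1=M(1-|x|^2)^{\alpha_+-\eta}$ only yields $|u|\le C_\eta(1-|x|^2)^{\alpha_+-\eta}$ for each $\eta>0$, with $C_\eta$ depending on $\eta$ through both the choice of $R_0=R_0(\eta)$ and the matching constant $M$; nothing prevents $C_\eta\to\infty$. To reach the sharp exponent one must either bootstrap (use the first sub-optimal bound to show $|u|^{2^*-2}$ decays at a positive rate, then build a refined barrier such as $M_1(1-|x|^2)^{\alpha_+}+M_2(1-|x|^2)^{\alpha_++\delta}$ that absorbs the nonlinear term) or, as the paper does, treat the nonlinear term as a bounded inhomogeneity and solve the resulting linear ODE exactly, which automatically produces the two-term bound $C_1r^{m_1}+C_2r^2$. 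Your $w_2$ barrier for $A>2$ is correct and does give the sharp rate $(N+2)/2$, but note that at the borderline $A=2$ it degenerates ($Lw_2\to 0$ at $|x|=1$), so the two regimes do not quite overlap without the refinement above.
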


We will prove this theorem in several steps. First a few propositions.
\begin{proposition}\label{P:2.2}
Let $v \in D^{1,2}(\rn_+)$ be a weak solution of the equation
\begin{equation}\label{eqnlinear}
- \De v +\eta \frac{v}{x_{N}^{2}} = (f)_{x_i} +gv 
\end{equation}
where $\eta \geq 0,f\in L^{\I}_{\mbox{loc}}(\overline{\rn_{+}})$ and $g\in L^{q}_{\mbox{loc}}(\overline{\rn_{+}})$ for
some $q>\frac{n}{2}$,then $v\in L^{\I}_{\mbox{loc}}(\overline{\rn_{+}})$.
\end{proposition}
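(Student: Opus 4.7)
The plan is to establish local boundedness by a Moser iteration, adapted to handle both the Hardy-type potential at $\{x_N=0\}$ and the divergence-form datum $(f)_{x_i}$. The crucial observation is that, because $\eta\geq 0$, the singular term $\eta v/x_N^2$ has the favorable sign in the natural energy estimate and may simply be discarded as a nonnegative contribution to the left-hand side after testing with $|v|^{2(s-1)}v\,\varphi^2$. Consequently the proof is essentially reduced to classical elliptic regularity for $-\Delta v=(f)_{x_i}+gv$, with the extra requirement that the iteration should go through up to the boundary $\{x_N=0\}$.

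Concretely, I would fix $x_0\in\overline{\rn_+}$ and radii $0<r<R$, pick a smooth cutoff $\varphi$ supported in $B_R(x_0)$ with $\varphi\equiv 1$ on $B_r(x_0)$, and test the equation against the truncated power $\psi=\varphi^{2}v\,\min(|v|,M)^{2(s-1)}$ with $s\geq 1$, taking $M\to\infty$ at the end to ensure admissibility in $D^{1,2}(\rn_+)$. Testing produces four contributions: (i) the standard energy term from $-\Delta v$, proportional to $(2s-1)\int\varphi^{2}|v|^{2(s-1)}|\nabla v|^{2}$ modulo an error controlled by $|\nabla\varphi|^{2}$; (ii) the Hardy term $\eta\int \varphi^{2}|v|^{2s}/x_N^{2}\geq 0$, which is discarded; (iii) a term coming from $(f)_{x_i}$, which after integration by parts is absorbed using $f\in L^{\infty}_{\mathrm{loc}}$ and Young's inequality; and (iv) the term $\int g\,\varphi^{2}|v|^{2s}$, estimated by H\"older with exponent $q>N/2$ together with the Sobolev embedding $D^{1,2}(\rn_+)\hookrightarrow L^{2^{*}}$, which can be absorbed provided the support of $\varphi$ is chosen so that $\|g\|_{L^{q}(\mathrm{supp}\,\varphi)}$ is sufficiently small (any local smallness suffices after a covering argument).

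The resulting reverse H\"older inequality has the schematic form $\|v\|_{L^{2^{*}s}(B_r)}\leq C(s,r,R)\,\|v\|_{L^{2s}(B_R)}$ with $C(s,r,R)$ growing only polynomially in $s$, and iterating along the geometric sequence of exponents $s_k=(2^{*}/2)^{k}$ together with a shrinking sequence of radii delivers the desired $L^{\infty}_{\mathrm{loc}}$ bound. I expect the main technical nuisance to be the case $x_0\in\partial\rn_+$: one must check that $\psi$ is a legitimate test function in $D^{1,2}(\rn_+)$ and that nothing is lost by discarding the Hardy term near the boundary. The nonnegativity of $\eta$ is precisely what removes what would otherwise be the delicate point (a sharp Hardy-type trade-off), making the up-to-the-boundary iteration qualitatively no harder than its interior counterpart.
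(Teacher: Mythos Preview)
Your proposal is correct and follows essentially the same route as the paper: Moser iteration with truncated power test functions, discarding the Hardy term thanks to $\eta\ge 0$, absorbing the divergence datum $(f)_{x_i}$ via integration by parts and Young's inequality, and controlling the $gv$ term through H\"older and the Sobolev inequality. The only cosmetic differences are that the paper works with $\tilde v=v^{+}+1$ (which streamlines the handling of the constant terms produced by $f\in L^\infty_{\mathrm{loc}}$) and absorbs the $g$ contribution by interpolation rather than by a smallness/covering argument; both variants are standard and lead to the same iteration inequality.
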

\begin{remark} Note that in the above proposition $f$ is only assumed to be in
$L^{\I}_{\mbox{loc}}$, by a weak solution
 we mean $v$ satisfies
 \begin{equation}\label{weak-sol}
\int\limits_{\rn_{+}} \nabla v\nabla \phi + \int\limits_{\rn_{+}}\eta
\frac{v\phi}{x_{N}^{2}} = 
-\int\limits_{\rn_{+}}f\phi_{x_i}+ \int\limits_{\rn_{+}}gv\phi\; ,\; \forall \phi
\in C_c^{\I}(\rn_{+})
 \end{equation}

\end{remark}

\begin{proof} 
We will prove the theorem using Moser Iteration. Fix a point $x_{0}\in \partial
\rn_{+}$ and $R>0$.
Define 
$\tilde v = v^{+} +1$ and 
\[
v_{m} =
 \begin{cases}
\tilde v  &\text{if $v < m$}\\
 1 + m  &\text{if $v \geq m$}\\
\end{cases}
\]
For $\ba > 0$ define the test function $w = w_{\ba}$ as $w_{\ba} =
\varphi^{2}(v_{m}^{2\ba}\tilde v - 1)$ where $\varphi \in C_{0}^{\I}(\rn)$, $0 \leq
\varphi \leq 1, \varphi \equiv 1 $  in $B(x_{0},r_{i+1})$, supp$\varphi \subseteq
B(x_{0}, r_{i})$, $R < r_{i+1} < r_{i} < 2R$ and $|\grad \varphi| \leq
\frac{C}{r_{i} - r_{i+1}}$ where $C$ is independent of $\varphi$.\\
Then $0 \leq w \in H_{0}^{1}(\bn)$ and using $\varphi = w$ as the test function in
\eqref{weak-sol}, we get  
\begin{equation} \label{weak-eqn}
 \int\limits_{\rn_{+}} \nabla v\nabla w + \int\limits_{\rn_{+}}\eta
\frac{vw}{x_{N}^{2}} = 
-\int\limits_{\rn_{+}}fw_{x_i}+ \int\limits_{\rn_{+}}gvw
\end{equation}
Now substituting $w$ and observing that $\int\limits_{\rn_{+}}\eta
\frac{vw}{x_{N}^{2}}\geq 0$ we get\\
L.H.S $\geq $
\[
 \int_{\rn_{+}} [v_{m}^{2\ba} \varphi^{2}\grad v \grad \tilde v + 2\ba v_{m}^{2\ba
-1} \tilde v \varphi^{2} \grad \tilde v
\grad v_{m} + 2 \varphi (v_{m}^{2\ba}\tilde v - 1) \grad \tilde v \grad  \varphi] dx
\] \\
In the support of 1st integral $\grad v = \grad \tilde v$, and in the support of 2nd
integral 
$v_{m} = \tilde v, 
\grad v_{m} = \grad \tilde v$.Therefore using Cauchy- Schwartz along with the above
fact we get
\begin{eqnarray}\label{E:2.3}
 \mbox{L.H.S} &\geq & \frac{1}{2} \int_{\rn_{+}}v_{m}^{2\ba} \varphi^{2}|\grad
\tilde v|^{2} dx + 2\ba\int_{\rn_{+}}v_{m}^{2\ba} \varphi^{2}
|\grad v_{m}|^{2} dx\nonumber\\
&&-2 \int_{\rn_{+} }|\grad \varphi|^{2} v_{m}^{2\ba} \tilde v^{2} dx
\end{eqnarray}
The RHS of \eqref{weak-eqn} can be estimated by
$$
  |\int\limits_{\rn_{+}}f[2(v_{m}^{2\ba}\tilde v - 1)\varphi\varphi_{x_i}+
 \varphi^{2}v_{m}^{2\ba}(\tilde v)_{x_i} + 2\ba\varphi^{2}v_{m}^{2\ba-1}\tilde
v(v_m)_{x_i}]| +|\int\limits_{\rn_{+}}g \varphi^{2}v_{m}^{2\ba}\tilde v ^2|
$$
$$\le \frac{1}{4} \int_{\rn_{+}}v_{m}^{2\ba} \varphi^{2}|\grad \tilde v|^{2} dx +
\ba\int_{\rn_{+}}v_{m}^{2\ba} \varphi^{2}
|\grad v_{m}|^{2} dx + C\int_{\rn_{+} }|\grad \varphi| v_{m}^{2\ba} \tilde v dx$$
$$ +C(1+\ba)\int_{\rn_{+}}v_{m}^{2\ba} \varphi^{2}+\int\limits_{\rn_{+}}|g|
\varphi^{2}v_{m}^{2\ba}\tilde v ^2 $$
where $C$ is a constant depending on the $L^{\I}$ norm of $f$ on $B(x_0,2R).$ Since
$\tilde v \ge 1$ we can estimate the RHS as
\begin{eqnarray}\label{rhs}
\mbox{R.H.S} \le \frac{1}{4} \int_{\rn_{+}}v_{m}^{2\ba} \varphi^{2}|\grad \tilde
v|^{2} dx + \ba\int_{\rn_{+}}v_{m}^{2\ba} \varphi^{2}|\grad v_{m}|^{2} dx \nonumber
\\
+C\frac{\ba}{(r_i-r_{i+1})^2} \int_{\rn_{+}} v_{m}^{2\ba} \tilde v^{2} dx
+\int\limits_{\rn_{+}}|g| \varphi^{2}v_{m}^{2\ba}\tilde v ^2
\end{eqnarray}
Using the estimates \eqref{E:2.3} and \eqref{rhs} in \eqref{weak-eqn} we get
\begin{eqnarray}\label{weak-esti}
\frac{1}{2} \int_{\rn_{+}}v_{m}^{2\ba} \varphi^{2}|\grad \tilde v|^{2} dx +
2\ba\int_{\rn_{+}}v_{m}^{2\ba} \varphi^{2}|\grad v_{m}|^{2} dx \nonumber\\
\le C\frac{1+\ba}{(r_i-r_{i+1})^2} \int_{\rn_{+}} v_{m}^{2\ba} \tilde v^{2} dx
+ \int\limits_{\rn_{+}}|g| \varphi^{2}v_{m}^{2\ba}\tilde v ^2
\end{eqnarray}

Defining  $\bar w = v_{m}^{\ba}\tilde v $, \eqref{weak-esti} becomes
\begin{eqnarray}\label{estimate}
 \frac{1}{4(1 + 2\ba)} \int_{\rn_{+}}|\grad(\varphi \bar w)|^{2} dx \leq 
 C\frac{(1+\ba)}{(r_i-r_{i+1})^2} \int_{\rn_{+}}(\varphi \bar w )^{2} dx
+ \int\limits_{\rn_{+}}|g| (\varphi \bar w) ^2 \nonumber \\
\le C\frac{1+\ba}{(r_i-r_{i+1})^2} \int_{\rn_{+}}(\varphi \bar w )^{2} dx +
C \left(\int\limits_{B(x_0,r_{i+1})}|\varphi \bar
w|^{q^\prime}\right)^{\frac{2}{q^\prime}}
\end{eqnarray}
where $\frac{1}{q} + \frac{1}{q^{\prime}} = 1$.Since $q > \frac{N}{2} \Rightarrow
{q}^{\prime} < \frac{N}{N-2} = r$ (say).Now let 
$\frac{1}{{q}^{\prime}} = \theta + \frac{1-\theta}{r}$,then using interpolation
inequality we get 
\[
 |(\varphi \bar w)^{2}|_{L^{{q}'}} \leq \var(1 - \theta)|(\varphi \bar
w)^{2}|_{L^{r}} + c_{1} \var^{-\frac{1-\theta}{\theta}}
|(\varphi \bar w)^{2}|_{L^{1}}\;,\forall \var
\]
where $\theta$ depends on $N, {q}^{\prime}$.Note that $2r = 2^{*}$.Therefore 
\[
 |(\varphi \bar w)^{2}|_{L^{r}} = |\varphi \bar w|^{2}_{L^{2^{*}}(\rn_{+})} \leq C
|\grad (\varphi \bar w)|^{2}_{L^{2}(\rn_{+})}
\]
Hence 
\[
 |(\varphi \bar w)^{2}|_{L^{q'}} \leq C\var |\grad(\varphi \bar
w)|^{2}_{L^{2}(\rn_{+})} + C\var ^{-\al} |(\varphi \bar w)^{2}|_{L^{1}{\rn_{+}}} 
\]
Now choosing $\var$ suitably and substituting in \eqref{estimate}, we get
\[
 \int_{\rn_{+}} |\grad (\varphi \bar w)|^{2} dx \leq \frac{C(1 + \ba)^\al}{(r_{i}
-r_{i +1})^{2}} \int_{B(x_{0},r_{i})} \bar w^{2} dx
\] Now using the Sobolev inequality in the above expression we get
\[
 \left(\int_{B(x_{0},r_{i+1})} \bar w^{\frac{2N}{N-2}} dx \right)^{\frac{N-2}{N}} \leq
\frac{C(1 + \ba)^{\al}}{(r_{i} -r_{i +1})^{2}} 
\int_{B(x_{0},r_{i})} \bar w^{2} dx
\]
Now using $\chi = \frac{N}{N-2} > 1 $,$\bar w = v_{m}^{\ba}\tilde v$, $v_{m} \leq
\tilde v$ and $\gamma = 2(\ba + 1)$ we get 
\[
 \left( \int_{B(x_{0},r_{i+1})} v^{\gamma \chi}_{m} dx \right)^{\frac{1}{\gamma \chi}}
\leq \left[\frac{C(1 + \ba)^{\al}}{(r_{i} -r_{i +1})^{2}}
\right]^{\frac{1}{\gamma}}
\left( \int_{B(x_{0},r_{i})} \tilde v^{\gamma} dx \right)^{\frac{1}{\gamma}}
\]
Now letting $m \rightarrow \I$ we get
\[
  \left( \int_{B(x_{0},r_{i+1})} \tilde v^{\gamma \chi} dx \right)^{\frac{1}{\gamma
\chi}} \leq \left[\frac{C(1 + \ba)}{(r_{i} -r_{i +1})^{2}} 
\right]^{\frac{1}{\gamma}}
\left( \int_{B(x_{0},r_{i})} \tilde v^{\gamma} dx \right)^{\frac{1}{\gamma}}
\]
provided $|\tilde v|_{L^{\gamma}(B_{x_{0},r_{i+1}})}$ is finite. C is a positive
constant independent of $\gamma$. Now we will complete the proof 
by iterating the above relation. Let us take $\gamma =2,2\chi, 2\chi^{2}\ldots$
i.e.,$\gamma_{i} = 2\chi^{i}$ for $i = 0,1,2,\ldots 
r_{i+1}= R + \frac{R}{2^{i+1}}$. Hence for $\gamma = \gamma_{i}$ we get 
\[
 \left(\int_{B(x_{0},r_{i+1})} \tilde v^{\gamma_{i}} dx \right)^{\frac{1}{\gamma_{i}}}
\leq C^{\frac{i+1}{\chi^{i+1}}}
\left(\int_{B(x_{0},r_{i})} \tilde v^{\gamma_{i+1}} dx
\right)^{\frac{1}{\gamma_{i+1}}}
\] 
where $C>1$ is a constant depends on $ R, N, ||g||_{L^{q}(B(x_0,2R))} $ 

and $ ||f||_{L^{\I}(B(x_0,2R))} $. Now by iteration we obtain 
\[
 \left(\int_{B(x_{0},r_{i+1})} \tilde v^{\gamma_{i}} dx \right)^{\frac{1}{\gamma_{i}}}
\leq C^{\sum {\frac{i+1}{\chi^{i+1}}}}
\left(\int_{B(x_{0},r_{i})} \tilde v^{\gamma_{i+1}} dx
\right)^{\frac{1}{\gamma_{i+1}}}
\]
letting $i\rightarrow \I$ we obtain 
\[
 \mbox{sup}_{B(x_{0} ,R)} \tilde v \leq \tilde C |\tilde v|_{L^{2}(B(x_{0},2R))}.
\]
This proves the local boundedness of $u^+$. Similarly we get the boundedness of
$u^-$ as $-u$ also
satisfies the same equation with $-f$ in place of $f$.  This proves the theorem.
\end{proof}

\begin{proposition}\label{h2reg}
Let $u\in D^{1,2}(\rn_+)$ be a weak solution of the problem
\begin{equation}\label{linearized}
- \De u +\eta \frac{u}{x_{N}^{2}} = |u|^{2^{*}-2}u 
\end{equation}
with $\eta \ge 0,$ then $u_{x_i},u_{x_{i}x_{i}}\in D^{1,2}(\rn_+)$ for all $1\<i<N.$
\end{proposition}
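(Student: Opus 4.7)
The plan is to exploit translation invariance of $-\De+\eta/x_N^2$ in the directions $x_1,\ldots,x_{N-1}$. For $i<N$, formally differentiating \eqref{linearized} in $x_i$ yields a linear equation for $u_{x_i}$ with the same principal part and coefficient $(2^*-1)|u|^{2^*-2}$; I would make this rigorous by showing that the difference quotients $u^h:=h^{-1}(\tau_h u-u)$ are uniformly bounded in $D^{1,2}(\rn_+)$, where $\tau_h$ denotes translation by $he_i$.

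\textbf{Bootstrap and first derivative.} As a preliminary I would upgrade $u$ to $L^\infty_{\mathrm{loc}}(\overline{\rn_+})$ via Brezis--Kato: starting from $|u|^{2^*-2}\in L^{N/2}$, iterating truncated Moser-type test functions as in the proof of Proposition \ref{P:2.2} pushes $u$ into every $L^p_{\mathrm{loc}}$, $p<\I$, after which Proposition \ref{P:2.2} itself with some $q>N/2$ closes the loop. Simultaneously, $u\in L^{2^*}(\rn_+)$ forces $\||u|^{2^*-2}\|_{L^{N/2}(\rn_+\setminus B_R)}\to 0$ as $R\to\I$. Since $\eta/x_N^2$ is $x_i$-independent, $u^h\in D^{1,2}(\rn_+)$ satisfies weakly
\[
-\De u^h+\eta\,\frac{u^h}{x_N^2}=a_h\,u^h,\qquad a_h:=(2^*-1)\int_0^1|u+thu^h|^{2^*-2}\,dt,
\]
with $a_h\le C(|u|^{2^*-2}+|\tau_h u|^{2^*-2})$. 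Testing with $u^h$ and splitting the right-hand side over $B_R$ and its complement, the interior integral is controlled by the $L^\infty_{\mathrm{loc}}$ bound on $a_h$ together with the difference-quotient inequality $\|u^h\|_{L^2(B_R)}\le\|\na u\|_{L^2(B_{R+|h|})}$, while the exterior integral is bounded by $\|a_h\|_{L^{N/2}(\rn_+\setminus B_R)}S^2\|\na u^h\|_2^2$, which for $R$ large enough absorbs into the left-hand side. Hence $\{u^h\}$ is bounded in $D^{1,2}$ and $u_{x_i}\in D^{1,2}(\rn_+)$ by weak compactness.

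\textbf{Second derivative and main obstacle.} In the limit $v:=u_{x_i}$ solves $-\De v+\eta v/x_N^2=(2^*-1)|u|^{2^*-2}v$ weakly; applying the same difference-quotient argument to $v$ gives a uniform $D^{1,2}$-bound on $v^h$, modulo an additional commutator $(2^*-1)(|u|^{2^*-2})^h\tau_h v$ on the right. The critical exponent is exactly the threshold at which the natural test of $u^h$ against itself provides no smallness from $a_h$; the scheme above closes only because $u\in L^{2^*}$ concentrates $a_h$ in $L^{N/2}$ on bounded sets, which combined with the $L^\infty_{\mathrm{loc}}$ bound turns the Moser calculation into an absorption argument. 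The most delicate point is the commutator in the high-dimensional range $2^*-2<1$ ($N\ge 7$), where the naive pointwise mean-value bound $||a|^p-|b|^p|\le C(|a|^{p-1}+|b|^{p-1})|a-b|$ (valid for $p\ge 1$) is unavailable and must be replaced by a H\"older-type estimate or a direct elliptic regularity argument for the linear equation satisfied by $v$ with coefficient locally in $L^\infty$.
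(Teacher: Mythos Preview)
Your first-derivative argument is sound and close to the paper's (which simply cites a reference for that step). The gap is in the second derivative: you correctly isolate the commutator $(|u|^{2^*-2})^h\,\tau_h v$ and note that for $N\ge 7$ the map $t\mapsto|t|^{2^*-2}$ is only H\"older, so no uniform-in-$h$ control is available. Your proposed fixes (``a H\"older-type estimate or a direct elliptic regularity argument'') do not close the global $D^{1,2}$ estimate: the H\"older bound produces a negative power of $|h|$, and local elliptic regularity for $v$ with an $L^\infty_{\mathrm{loc}}$ coefficient does not yield a global $D^{1,2}$ bound on $v_{x_i}$.

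The paper dissolves the obstacle by two moves you are missing. First, it upgrades your $L^\infty_{\mathrm{loc}}$ bound on $u$ to a \emph{global} one by observing that the Kelvin transform $|x|^{2-N}u(x/|x|^2)$ again solves \eqref{linearized}; applying the local bound to the transformed function controls $u$ at infinity. Second, instead of differencing the product $|u|^{2^*-2}v$, it uses that the right-hand side of the linearized equation equals $\partial_{x_i}\big(|u|^{2^*-2}u\big)$: testing the equation for $v=u_{x_i}$ with $w=-D_i^{-h}D_i^h v$ and performing one discrete and one classical integration by parts leaves
\[
\Big|\int D_i^h\big(|u|^{2^*-2}u\big)\,\partial_{x_i}\big(D_i^h v\big)\Big|
\;\le\; C\int\big(|u|^{2^*-2}+|\tau_h u|^{2^*-2}\big)\,|D_i^h u|\,\big|\nabla(D_i^h v)\big|.
\]
Here the difference quotient acts on $t\mapsto|t|^{2^*-2}t$, which is $C^1$ for every $N\ge 3$, so the mean-value inequality is valid; and because $u\in L^\infty(\rn_+)$ globally, the factor $|u|^{2^*-2}+|\tau_h u|^{2^*-2}$ is bounded by a constant. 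Young's inequality then absorbs $\|\nabla(D_i^h v)\|_2^2$ into the left-hand side and bounds the remainder by $\|D_i^h u\|_2^2\le\|\nabla u\|_2^2$. Both ingredients---Kelvin invariance for the global $L^\infty$ bound, and differencing the full nonlinearity rather than its coefficient---are what make the $N\ge 7$ case go through.
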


\begin{proof} Using Moser iteration as in Brezis-Kato (See \cite{St}, Appendix B, Lemma B3 )
we can show that $ u\in L^q_{\rm loc} (\overline{\rn_+})$ for some $q>\frac{2N}{N-2}.$ Thus from 
Proposition \ref{P:2.2} with $f=0$ and $g =|u|^{2^{*}-2} $ we get $|u(x)|\le M$ for all $|x|\le 1$
for some $M>0.$ Since the Kelvin transform of $u$ given by $\frac{1}{|x|^{N-2}}u(\frac{x}{|x|^2})$ also 
satisfies \eqref{linearized} we get $u\in L^\I(\rn_+).$\\
We will show that $u_{x_i},u_{x_{i}x_{i}}\in D^{1,2}(\rn_+)$ by the method of difference quotients. 
 The case of $u_{x_i}$ follows exactly as in Theorem 2.1 of \cite{BS}. To prove the estimate on $u_{x_{i}x_{i}} $,
 first note that by standard elliptic theorey $u \in C^{3,\al}(\rn_+)$. Differentiating \eqref{linearized} with 
respect to $x_i$ we see that $u_{x_i} \in D^{1,2}(\rn_+)$ satisfies
\begin{equation}\label{linearized-d}
- \De u_{x_i} +\eta \frac{u_{x_i}}{x_{N}^{2}} = (2^{*}-1)|u|^{2^{*}-2}u_{x_i} 
\end{equation}
Thus,we have for any $w\in D^{1,2}(\rn_{+})$ 
\begin{equation}\label{E2g}
 \int \grad u_{x_{i}} \grad w + \eta \int \frac{u_{x_{i}} w}{x_{N}^{2}} = (2^{*}-1)\int
|u|^{2^{*}-2} u_{x_{i}} w
\end{equation}
For $|h|> 0$ and $i< N$, define $w = -D_{i}^{-h}(D_{i}^{h}u_{x_{i}})$ where
$D_{i}^{h}$ denotes the difference quotient 
\[
 D_{i}^{h} u_{x_{i}}(x) = \frac{u_{x_{i}}(x + he_{i}) - u_{x_{i}}(x)}{h} 
\]

For this choice of $w$ the L.H.S of (\ref{E2g}) simplifies to $\int|\grad (D_{i}^{h} u_{x_{i}})|^{2}
 + \eta \int \frac{(D_{i}^{h} u_{x_{i}})^{2}}{x_{N}^{2}}$ while R.H.S can be
estimated as

\begin{eqnarray*}
 |\int |u|^{2^{*}-2} u_{x_{i}}w| &&= | \int \frac{\partial}{ \partial x_{i}}
D_{i}^{h}(|u|^{2^{*}-2} u) D_{i}^{h} u_{x_{i}}|\\
&&= |-  \int D_{i}^{h} (|u|^{2^{*}-2} u) \frac{\partial}{\partial x_{i}} (D_{i}^{h}
u_{x_{i}})|\\
&& \leq \int |D_{i}^{h}(|u|^{2^{*}-2} u)| |\grad (D_{i}^{h} u_{x_{i}})|\\
&& \leq C \int [|u|^{2^{*}-2}(x + he_{i}) + |u|^{2^{*}-2}(x)] |D_{i}^{h} u| |\grad
(D_{i}^{h} u_{x_{i}})|\\
&& \leq C \int |D_{i}^{h} u| |\grad (D_{i}^{h} u_{x_{i}})|\\
&& \leq C\varepsilon \int |\grad (D_{i}^{h} u_{x_{i}})|^{2} + \frac{C}{4
\varepsilon} \int |D_{i}^{h} u|^{2}
\end{eqnarray*}
 By choosing $C\varepsilon < 1$ we have  
\[
 \int_{\rn_{+}} |\grad (D_{i}^{h} u_{x_{i}})|^{2} \leq C \int_{\rn_{+}} |\grad
u|^{2} \leq C
\]
where $C$ is independent of $h$ and this implies $\int
|\grad u_{x_{i}x_{i}}| \leq M$ and this completes the proof.
\end{proof}

\begin{proposition}\label{ode-estimate}
Let $f : (0,\I)\rightarrow \R$ be a continuous function bounded in $(0,1)$ ,$\eta>0$
be a positive constant and $v$ solves the ODE 
\begin{equation}\label{E:2.8}
  -\frac{d^{2} v}{d r^{2}} + \eta \frac{v}{r^{2}} = f(r),\;\; v(0)=0
\end{equation}
then there exist constants $C_1,C_2$ depending only on the $L^{\I}$ norm of
$f|_{(0,1)}$ such that
\begin{equation}
|v(r)| \le C_1 r^{\frac{1 + \sqrt{4 \eta + 1}}{2}} + C_{2} r^{2} 
\end{equation}
holds for all $r \in (0,1).$
\end{proposition}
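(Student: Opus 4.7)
The plan is to apply variation of parameters using the two power-type solutions of the Euler operator $-d^2/dr^2 + \eta/r^2$, and then estimate the resulting integral representation directly.

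Substituting $r^{\alpha}$ into $-v'' + \eta v/r^2 = 0$ yields the indicial relation $\alpha^2 - \alpha - \eta = 0$, whose roots $\alpha_\pm = \frac{1 \pm \sqrt{1+4\eta}}{2}$ satisfy $\alpha_+ > 1 > 0 > \alpha_-$ when $\eta > 0$, with $\alpha_+ + \alpha_- = 1$ and constant Wronskian $\sqrt{1+4\eta}$. Since $r^{\alpha_-}$ is unbounded at the origin, the boundary condition $v(0)=0$ forces
\[
v(r) = A\, r^{\alpha_+} + v_p(r)
\]
for some constant $A$ and a particular solution $v_p$ vanishing at $0$.

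Next I would write
\[
v_p(r) = \frac{1}{\sqrt{1+4\eta}}\Big[r^{\alpha_-}\int_0^r s^{\alpha_+}f(s)\,ds - r^{\alpha_+}\int_a^r s^{\alpha_-}f(s)\,ds\Big],
\]
choosing $a = 0$ when $\alpha_- > -1$ (i.e.\ $\eta < 2$) and $a = 1$ otherwise; both choices give $v_p(0) = 0$. Writing $M := \|f\|_{L^\infty(0,1)}$, the first term is bounded by $\frac{M}{\alpha_++1}\, r^{\alpha_+ + \alpha_- + 1} = C r^2$ (using $\alpha_+ + \alpha_- = 1$). When $\alpha_- > -1$ the second integral is estimated identically to yield $Cr^2$. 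When $\alpha_- < -1$, the integral $\int_1^r s^{\alpha_-}\,ds$ decomposes into a bounded boundary piece, which multiplied by $r^{\alpha_+}$ contributes $Cr^{\alpha_+}$, and a singular piece of order $r^{\alpha_-+1}/|\alpha_-+1|$, which multiplied by $r^{\alpha_+}$ contributes $Cr^{2}$. Adding the homogeneous term $A r^{\alpha_+}$ produces the claimed bound $|v(r)| \le C_1 r^{\alpha_+} + C_2 r^2$.

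The key technical point is choosing the lower integration limit so that $v_p$ vanishes at the origin and only the two advertised rates $r^{\alpha_+}$ and $r^2$ appear in the bound; this is clean in the two regimes $\alpha_- > -1$ and $\alpha_- < -1$ above. The borderline case $\alpha_- = -1$ (equivalently $\eta = 2$, where $\alpha_+ = 2$ coincides with the second rate) is delicate because $\int_1^r s^{-1}\,ds = -\log r$ introduces a logarithmic factor, so this case must be handled by a separate argument.
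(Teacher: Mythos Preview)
Your argument is correct and follows essentially the same route as the paper: variation of parameters against the two Euler solutions $r^{\alpha_\pm}$, then elementary estimates of the resulting integrals. The only cosmetic difference is that the paper first substitutes $t=\log r$ to turn the Euler operator into a constant-coefficient one (so your $r^{\alpha_\pm}$ become $e^{m_1 t},e^{m_2 t}$) and always integrates the $\alpha_-$--piece from $r=1$ (their $t=0$), whereas you split into the regimes $\alpha_->-1$ and $\alpha_-<-1$; both choices yield the same $r^{\alpha_+}$ and $r^2$ contributions. Two remarks: (i) the paper's estimate divides by $m_2+1$ and so, like yours, tacitly excludes the borderline $\eta=2$; your flagging of that case is appropriate. (ii) Your coefficient $A$ in front of $r^{\alpha_+}$ depends on the particular solution $v$, not only on $\|f\|_{L^\infty(0,1)}$; the paper's $C_1$ has the same feature (it carries the complementary-function constant and $v_1(0)$), and in the application this is harmless because $v$ is known to be bounded, which controls $A$ uniformly.
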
.
\begin{proof} Let $v(r) =\theta(log r)$ then the equation  (\ref{E:2.8}) transforms to
\begin{equation}\label{E:2.9}
 \frac{d^{2} \theta}{d t^{2}}(t) - \frac{d \theta}{d t}(t) - \eta \theta(t) = - e^{2
t} f(e^{t})
\end{equation}
\[
 \theta(t)\rightarrow 0 \quad \mbox{as}\quad  t\rightarrow - \I
\]

Using the method of variation of parameters we can write 
\[
 \theta(t) = \theta_{c}(t) + \theta_{p}(t)
\]
where $\theta_{c}(t)$ is the Complementary Function given by 
\begin{equation}\label{E:2.10}
 \theta_{c}(t) =  C_{1} e^{m_{1}t} + C_{2} e^{m_{2} t}
\end{equation}
with $m_{1} = \frac{1+ \sqrt{4  \eta + 1}}{2}$ and  $m_{2} = \frac{1- \sqrt{4 \eta +
1}}{2}$,\\ $\theta_{p}$ is a particular integral given by 
\begin{equation}\label{E:2.12}
 \theta_{p}(t) = v_{1}(t)e^{m_{1}t} + v_{2}(t)e^{m_{2}t}
\end{equation}
where
\[
 v_{1}(t) = v_{1}(0) + \frac{1}{\sqrt{4 \eta+ 1}} \int_{t}^{0} e^{(m_{2} + 1)s}
f(e^{s}) ds
\]
\[
  v_{2}(t) = \frac{1}{\sqrt{4 \eta + 1}} \int_{-\I}^{t} e^{(m_{1} + 1)s} f(e^{s}) ds
\]
From the expressions for $v_1$ we get 
\begin{equation}\label{E:2.13}
 v_{1}(t) \leq v_{1}(0) + \frac{1}{\sqrt{4\eta + 1}} \frac{M}{m_{2} + 1} -
 \frac{M}{\sqrt{4 \eta + 1}(m_{2} + 1)} e^{(m_{2} + 1)t}
\end{equation}
where $|f|\le M $ on $(0,1).$ Thus,
\begin{equation}\label{E:2.14}
|v_{1}(t) e^{m_{1}t}| \leq C_1e^{m_{1}t} +  C_2 e^{2t}
\end{equation}

Similarly from the expression of $v_{2}(t)$ we have 
\begin{equation}\label{E:2.16}
|v_{2}(t)| \leq \frac{M}{\sqrt{4 \eta +1}} \int_{-\I}^{t} e^{(m_{1} + 1)s} ds = 
 \frac{M}{\sqrt{4 \eta +1}(m_{1} + 1)} e^{(m_{1} +1)t}
\end{equation}
Thus ,
\begin{equation}\label{E:2.18}
 |v_{2}(t) e^{m_{2}t}| \leq \frac{M}{\sqrt{4 \eta +1}(m_{1} + 1)} e^{2t}
\end{equation}
Since $\theta(t)\rightarrow 0 \quad \mbox{as}\quad  t\rightarrow - \I$,
 using (\ref{E:2.14})and (\ref{E:2.18}) we get  $C_{2} = 0$ in \eqref{E:2.10}. \\
Using these informations we get
\begin{equation}\label{E:2.23}
 |\theta(t)|  \leq C_{1}e^{m_{1}t} + \tilde C_{2} e^{2t}
\end{equation}
for all $t<0$. Changing the variable as $r=e^t$ proves the proposition.
\end{proof}

{\bf Proof of Theorem \ref{T:2.1}} Let M be the isometry between $\bn$ and the upper
half space model $\hn $ given by 
\begin{equation}\label{iso}
 M(x) := \left( \frac {2 x^\prime}{(1+x_N)^2 + |x^\prime|^2},\frac {1-|x|^2}{(1+x_N)^2 + |x^\prime|^2} \right)
\end{equation}
where a point in $\hn $ is denoted by $x=(x^\prime,x_N)\in \R^{N-1}\times \R.$
 Then $\tilde u = u\circ M : \hn \rightarrow \R$ satisfies the equation (note $M^{-1} =M$)
\begin{equation}\label{hamodelen}
-\De_{\hn} \tilde u - \la \tilde u = |\tilde u|^{2^{*}-2} \tilde u,   \tilde u \in
H^{1}(\hn)
\end{equation}
where $\De_{\hn}$ is the Laplace-Beltrami operator in $\hn$ given by
\begin{equation}
 \De_{\hn}u =x_N^2\De u -(N-2)x_N u_{x_N}
\end{equation}
Making a conformal change of the metric, defining
$v(x)= x_N^{-\frac{N-2}{2}}\tilde u(x)$, $v$ satisfies the equation \eqref{linearized}
with $\eta= (\frac{N(N-2)}{4}-\la)\ge 0. $\\
 From standard elliptic theory we know that $v \in C^{3,\al}_{\rm loc}(\rn_+)$.
Moreover Proposition \ref{h2reg} tells us that  $v_{x_i},v_{x_ix_i} \in D^{1,2}(\rn_+)$ for all $1\<i<N$
 and $v$ is bounded.
 Next we claim that $ v_{x_i}$ and $v_{x_ix_i}$ are locally bounded for $1<i<N.$\\\\
We know that $ v_{x_i}$ satisfies \eqref{linearized-d}. Applying Proposition \ref{P:2.2}
with with $f=0,g=(2^*-1)|v|^{2^*-2}$, we get $v_{x_i}$ is locally bounded.\\
Since $v_{x_i}$ is in $D^{1,2}({\rn}_+)$ we get
$$\int\limits_{\rn_+}\nabla v_{x_i} \nabla \varphi +\eta
\int\limits_{\rn_+}\frac{v_{x_i} \varphi}{x_N^2} =
\int\limits_{\rn_+}(2^*-1)|v|^{2^*-2}v_{x_i}\varphi $$
for all $\varphi \in C_c^{\I}(\rn_+)$. Taking $\varphi_{x_i}$ instead of $\varphi$
and an integration by parts gives
$$\int\limits_{\rn_+}\nabla v_{x_ix_i} \nabla \varphi +\eta
\int\limits_{\rn_+}\frac{v_{x_ix_i} \varphi}{x_N^2} =
-(2^*-1)\int\limits_{\rn_+}|v|^{2^*-2}v_{x_i}\varphi_{x_i} $$
This shows that $v_{x_ix_i}$ satifies \eqref{eqnlinear} with
$f=(2^*-1)|v|^{2^*-2}v_{x_i}$ and $g=0.$
This proves the local boundedness of $v_{x_ix_i}$ for $i<N.$\\\\
Now we will estimate the solution $v$. Fix a point
$x'=(x_{1},\ldots,x_{N-1})\in \mathbb{R}^{N-1}$, and define $v(r) = v(x',r)$ for
$r>0.$ Then $v$ satisfies the ODE \eqref{E:2.8}
with $f(r) = \sum\limits_{i<N}v_{x_ix_i}(x',r)+ |v|^{2^*-2}v(x',r).$ Thus from
Proposition \ref{ode-estimate}
we get
\begin{equation}\label{E:2.26}
 |v(x',r)| \leq C_{1} r^{\frac{1 + \sqrt{4 \eta + 1}}{2}} + C_{2} r^{2}  
\end{equation}
 Since $C_{1}$ and $C_{2}$ depends only on the local bound on $f$, from the uniform
bound of $\sum\limits_{i<N}v_{x_ix_i}(x',r)+ |v|^{2^*-2}v(x',r)$ on compact subsets
of $\overline{\rn_+}$ we get the above estimate locally in $\overline{\rn_+}$, in
particular
\begin{equation}\label{E:2.27}
  |v(x)| \leq C_{1} {x_N}^{\frac{1 + \sqrt{4 \eta + 1}}{2}} + C_{2} x_N^{2}  \quad
\forall x\in B(0,1)\cap \rn_{+}
\end{equation}
To get a global bound on $v$, first observe that if $v$ is a solution of
(\ref{linearized}) 
then its Kelvin transform $\tilde v(x) := \frac{1}{|x|^{N-2}} v(\frac{x}{|x|^{2}})$
also solves  (\ref{linearized}).So $\tilde v$ also satisfies the estimate
\eqref{E:2.27}.
 Hence we have,
\begin{equation}\label{E:2.29}
|v(x)| \leq C_{1} \frac{x_N^{m_{1}}}{|x|^{N-2 + 2m_{1}}} + C_{2}
\frac{x_N^{2}}{|x|^{N-2 +4}} \quad \forall x \in (B(0,1))^{c}\cap \rn_{+} 
\end{equation}
So, combining (\ref{E:2.27}) and (\ref{E:2.29}) we have,
\begin{equation}\label{E:2.30}
|v(x)|  \leq C_{1} \frac{x_N^{m_{1}}}{(1 + |x|^{2})^{\frac{N-2+2m_{1}}{2}}} + C_{2}
\frac{x_N^{2}}{(1 + |x|^{2})^{\frac{N-2+4}{2}}}
 \quad \forall x\in  \rn_{+}
\end{equation}
Now recall that $\tilde u = x_N^{\frac{N-2}{2}} v$  and hence $\tilde u$ satisfies the
estimate   
\begin{equation}\label{E:2.31}
|u(x)| \leq   C_{1} \frac{x_N^{m_{1}+(N-2)/2}}{((1+x_N)^{2} +
|x'|^{2})^{\frac{N-2+2m_{1}}{2}}} + C_{2} \frac{x_N^{{2} +(N-2)/2}}
{((1+x_N)^{2} + |x'|^{2})^{\frac{N-2+4}{2}}}
\end{equation}
For a point $\xi \in \bn$, let $M(\xi) = x$ then $\frac{1-|\xi|^2}{2} =
\frac{x_N}{(1+x_N)^2 +|x'|^2}$.
Since $u = \tilde u \circ M$ we get 
\begin{equation}\label{E:2.32}
 |u(\xi)| = |u(x',x_N)| \leq   C_{1} \frac{x_N^{m_{1}+(N-2)/2}}{((1+x_N)^{2} +
|x'|^{2})^{\frac{N-2+2m_{1}}{2}}} + C_{2} \frac{x_N^{{2} +(N-2)/2}}
{((1+x_N)^{2} + |x'|^{2})^{\frac{N-2+4}{2}}}
\end{equation}
where $|\xi| < 1$. Now putting the value of $m_{1}$ and $m_{2}$ we get 
\[
  |u(\xi)| \leq C\left[ \left(\frac{1 - |\xi|^{2}}{2}\right)^{\frac{(N-1) +
\sqrt{(N-1)^{2}- 4\la}}{2}} + \left(\frac{1-|\xi|^{2}}{2}\right)^
{\frac{N+2}{2}}\right]
\]
This proves the theorem.
\section{Compactness of solutions} 
In this section we will study the compactness properties of solutions of the equation
\begin{equation}\label{E:5.2}
 - \Delta_{\bn} u -\lambda u = |u|^{p-2}u, \quad u \in H^{1}_{r}(\bn)
\end{equation}
where $H^{1}_{r}(\bn)$ denotes the subspace of $H^{1}(\bn)$ consisting of radial functions, $p\in (2, 2^{*}]$ 
and $\frac{N(N-2)}{4}< \la <\left(\frac{N-2}{2}\right)^2$.\\
First recall the following radial estimate (see \cite{BS-2},Theorem 3.1 for a proof): 
\begin{lemma}\label{T:5.1}
 Let $\cal{A}$ be a bounded subset of $H^{1}_r(\bn)$ , then there exists a constant $C$ depending only on $\cal{A}$
such that
\begin{equation}
  |u(x)| \leq C|x|^{-\frac{N}{2}} \left( 1-|x|^{2}\right)^{\frac{N-1}{2}}
\end{equation} 
holds for all $u \in \cal{A}.$ 
\end{lemma}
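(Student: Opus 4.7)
The plan is to run a classical radial-Sobolev (Strauss-type) argument adapted to the hyperbolic geometry. I would introduce geodesic polar coordinates $\rho\geq 0$ on $\bn$ via $|x|=\tanh(\rho/2)$, so that $\sinh\rho=\frac{2|x|}{1-|x|^{2}}$, under which the $H^{1}$ norm of a radial function $u$ becomes the one-dimensional weighted norm
\[
\|u\|_{H^{1}(\bn)}^{2}=\om_{N-1}\int_{0}^{\I}\bigl(u_{\rho}^{\,2}+u^{2}\bigr)\sinh^{N-1}\!\rho\,d\rho.
\]
Boundedness of $\cal{A}\subset H^{1}_{r}(\bn)$ is therefore equivalent to uniform control of both weighted $L^{2}$-integrals by a constant depending only on $\cal{A}$.

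For $u\in\cal{A}$ set $G(\rho):=u(\rho)^{2}\sinh^{N-1}\!\rho$. Assuming for the moment $G(\I)=0$ (true on $C_{c}^{\I}$), the fundamental theorem of calculus yields
\[
G(\rho)=-\int_{\rho}^{\I}\!\!\bigl[\,2uu_{\rho}\sinh^{N-1}\!s+(N-1)u^{2}\sinh^{N-2}\!s\cosh s\,\bigr]ds.
\]
Discarding the nonnegative second term (with its sign) and applying Cauchy--Schwarz to the first gives
\[
u(\rho)^{2}\sinh^{N-1}\!\rho\;\leq\;2\Bigl(\int_{0}^{\I}\!u^{2}\sinh^{N-1}\Bigr)^{\!1/2}\Bigl(\int_{0}^{\I}\!u_{\rho}^{\,2}\sinh^{N-1}\Bigr)^{\!1/2}\;\leq\;C\,\|u\|_{H^{1}(\bn)}^{2}.
\]
A density argument then extends this pointwise inequality to all of $H^{1}_{r}(\bn)$ for the continuous radial representative.

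Undoing the substitution via $\sinh\rho=2|x|/(1-|x|^{2})$ rewrites the left-hand side as $u(x)^{2}\bigl(2|x|/(1-|x|^{2})\bigr)^{N-1}$, producing
\[
|u(x)|\;\leq\;C\,|x|^{-(N-1)/2}(1-|x|^{2})^{(N-1)/2},
\]
which is strictly stronger than the claimed bound since $|x|^{-(N-1)/2}\leq |x|^{-N/2}$ on $(0,1)$. The main technical obstacle is the boundary behavior $G(\I)=0$ needed to justify the integration by parts for a general element of $H^{1}_{r}(\bn)$ rather than for smooth compactly supported functions; I would handle this by cutting off at $\rho=R$ and letting $R\to\I$, using $\int_{0}^{\I}\!u^{2}\sinh^{N-1}d\rho<\I$ together with a subsequence in $R$ to kill the boundary contribution, or equivalently by invoking density of radial $C_{c}^{\I}$ functions in $H^{1}_{r}(\bn)$.
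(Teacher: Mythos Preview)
The paper does not supply a proof of this lemma; it simply recalls the estimate and refers the reader to \cite{BS-2}, Theorem~3.1. Your proposal is the expected Strauss-type radial argument carried out in hyperbolic geodesic polar coordinates, and it is correct: writing $G(\rho)=u(\rho)^{2}\sinh^{N-1}\!\rho$, using $G(\rho)=-\int_{\rho}^{\infty}G'$, dropping the sign-definite term $(N-1)u^{2}\sinh^{N-2}\!\rho\,\cosh\rho$, and applying Cauchy--Schwarz yields $u(\rho)^{2}\sinh^{N-1}\!\rho\le C\|u\|_{H^{1}(\bn)}^{2}$. Density of radial $C_{c}^{\infty}$ functions in $H^{1}_{r}(\bn)$ (valid on any complete manifold) handles the boundary term $G(\infty)=0$, since the pointwise evaluation $u\mapsto u(\rho)$ for fixed $\rho>0$ is then seen to be continuous on $H^{1}_{r}(\bn)$.

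Two minor remarks. First, as you note, your argument produces the exponent $|x|^{-(N-1)/2}$ rather than the $|x|^{-N/2}$ stated in the lemma; this is indeed sharper on $(0,1)$ and is the exponent one would expect from the classical Strauss lemma. The weaker exponent in the paper's statement is harmless for its applications, where only the decay $(1-|x|^{2})^{(N-1)/2}$ near the boundary matters. Second, your identification of the $H^{1}$ norm in polar coordinates is correct because in geodesic coordinates the hyperbolic metric is $d\rho^{2}+\sinh^{2}\!\rho\,g_{S^{N-1}}$, so for radial $u$ one has $|\nabla_{\bn}u|^{2}=u_{\rho}^{2}$ and $dV_{\bn}=\sinh^{N-1}\!\rho\,d\rho\,d\omega$.
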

This estimate gives us control over the radial functions away from the origin. The main result we prove in this section rules out blow-up
at the orgin in higher dimensions if members of $\cal{A}$ are  in addition solutions of \eqref{E:5.2}.\\    
\begin{theorem} \label{T:5.2}
 Let $N \geq 7$ and $\cal{A}$ be a bounded subset of $H^{1}_{r}(\bn)$ consisting of solutions of \eqref{E:5.2}
for a fixed $\lambda$ and $p$ varying in $(2,2^*]$, then there exists a constant $C$ depending only on $\cal{A}$
such that
\begin{equation}
  |u(x)| \leq C (1-|x|^{2})^{\frac{N-1}{2}}
\end{equation} 
holds for all $u \in \cal{A}.$ 
\end{theorem}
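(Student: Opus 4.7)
I would argue by contradiction. Suppose the conclusion fails along a sequence $u_n\in\mathcal{A}$ solving \eqref{E:5.2} with exponents $p_n\in(2,2^*]$, so that
\[
L_n := \sup_{x\in\bn}(1-|x|^2)^{-(N-1)/2}|u_n(x)| \to \I.
\]
Lemma \ref{T:5.1} bounds the same quantity by $C|x|^{-N/2}$, so the supremum must be attained at points $x_n$ with $r_n := |x_n|\to 0$ and $M_n := |u_n(x_n)|\to\I$. By radial symmetry I may take $x_n = r_n e_1$. I then rescale at the natural concentration scale $\mu_n := M_n^{-(p_n-2)/2}$ by setting $v_n(y) := M_n^{-1} u_n(x_n+\mu_n y)$. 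Writing $\De_{\bn}$ in Euclidean coordinates and using that the conformal factor tends to $1$ at the origin, a direct computation shows that $v_n$ satisfies
\[
-\De v_n = c_N |v_n|^{p_n - 2} v_n + o(1) \quad\text{on}\quad \Om_n \uparrow \rn,
\]
the $o(1)$ absorbing the $\la u$ term and hyperbolic corrections (both carrying a $\mu_n^2$ prefactor). Since $\|v_n\|_\I\le 1$ and $|v_n(0)|=1$, elliptic regularity gives a subsequential limit $v_n\to v$ in $C^2_{\mathrm{loc}}(\rn)$, where $v$ is a nontrivial bounded solution of $-\De v = c_N|v|^{p_0-2}v$ with $p_n\to p_0\in[2,2^*]$.

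If $p_0 < 2^*$, the scaling identity $\int|v_n|^{p_n}\,dy = M_n^{p_n(N-2)/2-N}\int|u_n|^{p_n}\,dx$ has negative exponent, so its left side vanishes while Fatou gives $\int_{B_R}|v|^{p_0}>0$, a contradiction. For $p_0 = 2^*$ I split cases on $r_n/\mu_n$. If $r_n/\mu_n\to\I$, then the symmetry center of $v_n$, namely $-x_n/\mu_n$, escapes to infinity, forcing $v$ to depend on one coordinate only: $v(y)=V(y\cdot e)$. The first integral of $-V''=c_N|V|^{2^*-2}V$ shows bounded nontrivial solutions are periodic, hence $\int_{B_R}|v|^{2^*}\to\I$ with $R$, contradicting the uniform bound on $\int|v_n|^{2^*}\,dy = \int_{\bn}|u_n|^{2^*}\,dx$ coming from the $H^1(\bn)\hookrightarrow L^{2^*}(\bn)$ embedding.

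The remaining case, $p_0=2^*$ with $r_n/\mu_n$ bounded, is where $v$ is a finite-energy radial solution of $-\De v = c_N|v|^{2^*-2}v$ on $\rn$ (a Talenti bubble or a sign-changing radial solution), and this is where the hypothesis $N\ge 7$ must enter. I would rule it out by applying the Pohozaev identity to $v_n$ on balls $B_{R_n}$ with $R_n\to\I$ and $\mu_n R_n\to 0$, so that $B_{R_n}$ stays in the near-Euclidean regime. The rescaled linear term contributes $c\,\la\,\mu_n^2\int_{B_{R_n}}v_n^2\,dy$ to the identity, while the boundary terms vanish thanks to the $|y|^{-(N-2)}$ decay of $v$. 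The condition $N\ge 7$ is precisely what ensures the weighted moments of $v$ that arise (notably $\int|y|^2|\na v|^2\,dy$) are finite, which allows the limiting identity to be consistent and yields $\la\le 0$, contradicting $\la>\frac{N(N-2)}{4}$. The principal obstacle is implementing this Pohozaev argument cleanly: selecting the correct multiplier and radius $R_n$ so that the boundary terms and hyperbolic remainders all vanish while the $\la$-contribution survives. This is the radial analogue of Solimini's Euclidean compactness result \cite{So}, transported to the hyperbolic setting.
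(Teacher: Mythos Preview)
Your overall strategy (blow up at the maximum, reduce to a Liouville-type equation, and close with a local Pohozaev) is in the right spirit and is close to what the paper does, but there are two genuine gaps in the final step.

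\textbf{The Pohozaev conclusion is misstated.} You assert that the rescaled Pohozaev ``yields $\la\le 0$''. This is not what one obtains. In your rescaled equation
\[
-\Big(\tfrac{1-|\mu_n y|^2}{2}\Big)^2\De v_n-(N-2)\tfrac{1-|\mu_n y|^2}{2}\,\mu_n^2\,y\cdot\na v_n-\la\mu_n^2 v_n=|v_n|^{p_n-2}v_n,
\]
the deviation of the leading coefficient from $\tfrac14$ and the first-order drift term are both of order $\mu_n^2$, i.e.\ the \emph{same} scale as the $\la$-term. When you carry out the Pohozaev computation at order $\mu_n^2$, these geometric contributions do not vanish: they combine with $\la$ to produce exactly $\tilde\la=\la-\tfrac{N(N-2)}{4}$. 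This is precisely why the paper makes the conformal substitution $v=(\tfrac{2}{1-|x|^2})^{(N-2)/2}u$ \emph{before} blowing up: it turns the problem into the Euclidean equation \eqref{conc-eu} with the single parameter $\tilde\la>0$, so that the Pohozaev obstruction is isolated from the start. Your route can be repaired, but only by tracking those curvature terms; the conclusion you should aim for is $\tilde\la\le 0$, which still contradicts the hypothesis.

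\textbf{The role of $N\ge 7$ and the boundary control.} Your explanation that $N\ge 7$ ensures finiteness of moments like $\int|y|^2|\na v|^2$ is not the right mechanism; those moments are already finite for $N\ge 5$ for the Aubin--Talenti profile. In the paper, after the conformal change the argument is run in the \emph{original} variables on balls of radius $\sim\sqrt{\eps_n}$ (not $R_n\mu_n$ with $R_n\to\I$), using Solimini's profile decomposition \eqref{E:5.6} and the controlled-sequence estimate \eqref{E:5.8}. This produces the inequality $C_2\eps_n^{2}\le C_1\eps_n^{(N-2)/2}$, and it is the comparison of these two powers that forces $N\ge 7$. Your $C^2_{\rm loc}$ convergence of $v_n$ to a single bubble does \emph{not} by itself control $v_n$ or $\na v_n$ on $\partial B_{R_n}$ with $R_n\to\I$: there may be further bubbles at intermediate scales, and nothing in your argument excludes them. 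The paper handles this by invoking the full profile decomposition (see the passage around \eqref{E:5.6}) and then choosing the slowest scale $\eps_n$; the key boundary estimate comes from Lemma~\ref{l:5.7}, not from pointwise decay of a single limiting profile.

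In short: do the conformal change first so that $\tilde\la$ (not $\la$) drives the Pohozaev obstruction, and replace the single-bubble $C^2_{\rm loc}$ picture by a genuine profile decomposition with boundary control at the intermediate scale $\sqrt{\eps_n}$; only then does $N\ge 7$ emerge naturally.
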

As a corollory we have the following compactness theorem :
\begin{corollary}\label{comp-sol} Let $N \geq 7$ and $u_n $ be a sequence of solutions of \eqref{E:5.2} with $p=p_n \in (2,2^*]$. 
Suppose $p_n\rightarrow p_0 \in (2,2^*]$ and $u_n$ is bounded in $H^{1}_{r}(\bn)$, then up to a subsequence $u_n \rightarrow u $ in 
$H^{1}_{r}(\bn)$ 
and $u$ solves \eqref{E:5.2} with $p=p_0$. Moreover $u_n \rightarrow u $ in $C(\bn)$.
\end{corollary}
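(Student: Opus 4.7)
The strategy is to leverage the sharp pointwise decay from Theorem~\ref{T:5.2} to upgrade the weak compactness of $\{u_n\}$ in $H^1_r(\bn)$ into strong convergence, even at the critical exponent. Since the sequence is bounded, I extract a subsequence (still denoted $u_n$) converging weakly to some $u\in H^1_r(\bn)$; local Rellich compactness inside $\bn$ ensures $u_n\to u$ pointwise a.e., and Theorem~\ref{T:5.2} gives the uniform bound $|u_n(x)|\le C(1-|x|^2)^{(N-1)/2}$, which the limit $u$ inherits. To identify $u$ as a solution of \eqref{E:5.2} with $p=p_0$, I test the weak formulation against $\va\in C_c^{\I}(\bn)$: the gradient and $L^2$ pairings pass to the limit by weak convergence in $H^1_r(\bn)$, while in the nonlinear term the pointwise convergence together with the locally bounded majorant for $|u_n|^{p_n-1}|\va|$ permits dominated convergence.

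The crucial step is to promote this to strong $H^1_r(\bn)$ convergence. Because $\la<((N-1)/2)^2$ lies strictly below the bottom of the $L^2$ spectrum of $-\De_{\bn}$, the quadratic form $\mathcal{E}(v):=\int_{\bn}(|\grad_{\bn} v|^2-\la v^2)\,dV_{\bn}$ induces a Hilbert norm equivalent to $\|\cdot\|_{H^1(\bn)}$. Testing \eqref{E:5.2} against $u_n$ and the limit equation against $u$ gives $\mathcal{E}(u_n)=\int_{\bn}|u_n|^{p_n}\,dV_{\bn}$ and $\mathcal{E}(u)=\int_{\bn}|u|^{p_0}\,dV_{\bn}$. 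Writing $dV_{\bn}=(2/(1-|x|^2))^N\,dx$ and applying Theorem~\ref{T:5.2}, the integrand of $\int_{\bn}|u_n|^{p_n}\,dV_{\bn}$ is dominated on the Euclidean disc by $C(1-|x|^2)^{p_n(N-1)/2-N}$. Since $p_n\to p_0>2$, eventually $p_n\ge 2+\de$ for some $\de>0$, forcing the exponent to stay strictly greater than $-1$ and yielding a uniform Lebesgue-integrable majorant. Dominated convergence then gives $\mathcal{E}(u_n)\to\mathcal{E}(u)$, and combined with weak convergence in the energy inner product this upgrades to strong convergence in $H^1_r(\bn)$.

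For uniform convergence in $C(\bn)$, the global bound $\|u_n\|_{L^{\I}(\bn)}\le C$ from Theorem~\ref{T:5.2} ensures that the right-hand side $|u_n|^{p_n-2}u_n$ is uniformly bounded in $L^{\I}(\bn)$; interior Schauder estimates then give uniform $C^{1,\al}_{\mathrm{loc}}(\bn)$ bounds, so Arzel\`a--Ascoli yields uniform convergence on compact subsets of $\bn$, and the decay estimate $|u_n(x)-u(x)|\le 2C(1-|x|^2)^{(N-1)/2}$ extends this to all of $\bn$.

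I expect the main obstacle to lie in the critical case $p_0=2^*$, where the Sobolev embedding $H^1_r(\bn)\hookrightarrow L^{p_0}(\bn)$ fails to be compact and no purely functional-analytic argument can close. The scheme works precisely because Theorem~\ref{T:5.2} is sharp: the decay rate $(N-1)/2$ together with the strict inequality $p_0>2$ is exactly what keeps the weighted integrand inside the integrability threshold, ruling out energy concentration at the origin and escape to the ideal boundary simultaneously.
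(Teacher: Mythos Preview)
Your proof is correct and follows essentially the same route as the paper: extract a weak/pointwise limit, identify it as a solution, then use the energy identity together with the uniform decay from Theorem~\ref{T:5.2} and dominated convergence to pass $\mathcal{E}(u_n)=\int_{\bn}|u_n|^{p_n}\,dV_{\bn}$ to the limit, concluding strong $H^1_r$-convergence via the equivalent norm of Lemma~\ref{PSE-l}; the $C(\bn)$ step via local elliptic estimates plus the decay bound is likewise what the paper invokes. Your write-up is simply more explicit than the paper's about the integrable majorant and the uniform convergence argument.
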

\begin{proof} Since $u_n$ is bounded in $H^{1}_{r}(\bn)$ up to a subsequence we may assume that $u_n$ converges weakly and
pointwise a.e. to $u\in H^{1}_{r}(\bn).$ We can immediately see that $u $ solves \eqref{E:5.2} with $p=p_0$ and hence
$$ \int\limits_{\bn}|\nabla_{\bn} u|^2 dV_{\bn} - \la \int\limits_{\bn}u^2 dV_{\bn} = \int\limits_{\bn}|u|^{p_0}dV_{\bn} $$
Since $u_n $ solves \eqref{E:5.2} with $p=p_n $ we get
$$ \int\limits_{\bn}|\nabla_{\bn} u_n|^2 dV_{\bn} - \la \int\limits_{\bn}u_n^2 dV_{\bn} = \int\limits_{\bn}|u_n|^{p_n}dV_{\bn} $$
Using the estimate in Theorem \ref{T:5.2} and dominated convergence theorem we get the RHS converges to $\int\limits_{\bn}|u|^{p_0}dV_{\bn}.$
Combining  we get $$ \int\limits_{\bn}|\nabla_{\bn} u_n|^2 dV_{\bn} - \la \int\limits_{\bn}u_n^2 dV_{\bn} \rightarrow 
 \int\limits_{\bn}|\nabla_{\bn} u|^2 dV_{\bn} - \la \int\limits_{\bn}u^2 dV_{\bn} $$ and hence in $H^{1}_{r}(\bn)$ thanks to Lemma \ref{PSE-l} .
Now the convergence in $C(\bn)$ follows by standard elliptic estimates and the decay estimate.
\end{proof}

{\bf Proof of Theorem \ref{T:5.2}} Suppose the theorem is not true, then there exists $u_n \in \cal{A}$ such that 
$\max\limits_{x\in \bn}|u_n(x)| = |u_n(x_n)|\rightarrow \infty$ where $u_n$ satisfies \eqref{E:5.2} with $p =p_n$ and we
assume $p_n \rightarrow p_0 \in (2,2^*].$ From Lemma \ref{T:5.1}, it is clear that $x_n\rightarrow 0.$
We will show that this leads to a contradiction.\\\\
Define, $v_n(x) = \left(\frac{2}{1-|x|^{2}}\right)^{\frac{N-2}{2}} u_n$ , then $v_n$ is a bounded sequence in the Euclidean Sobolev space
$H^1_0(\bn)$ and solves the Euclidean equation 
\begin{equation}\label{conc-eu}
 -\Delta v_n - \tilde \lambda \left(\frac{2}{1- |x|^{2}}\right)^{2} v_n = |v_n|^{p_n-2}v_n\left(\frac{2}{1- |x|^{2}}\right)^{q_n},
v_n\in H^{1}_{0}(\bn)
\end{equation}
where $\tilde \lambda = \la -\frac{N(N-2)}{4}>0$ and $q_n= \frac{2N-p_n(N-2)}{2}.$ Using
 Lemma \ref{T:5.1} 
\begin{equation}\label{est-ra}
  |v_n(x)| \leq C|x|^{-\frac{N}{2}} \left( 1-|x|^{2}\right)^{\frac{1}{2}},\;\; \forall n
\end{equation}
Also if $p_0<2^*$ by standard elliptic estimates we get $\max\limits_{|x|\le \frac12}|v_n(x)| \le C<\infty,\forall n$, which is 
impossible as $|v_n(x_n)|\rightarrow \infty.$ Therefore $p_0=2^*$.\\
Since $v_n$ is bounded in $H^{1}_{0}(\bn)$ we may assume up to a subsequence  $v_n$ converges weakly and pointwise a.e to 
$v$ in $H^{1}_{0}(\bn)$. If this convergence
is strong then by standard Brezis-Kato type arguments we get $\max\limits_{|x|\le \frac12}|v_n(x)| \le C<\infty,\forall n.$ 
Therefore $v_n\not\rightarrow v$ in $H^{1}_{0}(\bn)$. However $v$ solves
\begin{equation}\label{co-lt-eu}
  -\Delta v - \tilde \lambda \left(\frac{2}{1- |x|^{2}}\right)^{2} v = |v|^{2^*-2}v
\end{equation}

Choose a cut off function $\varphi \in C_{0}^{\I}(\rn)$ such that $\varphi =1$ in $B_{1}= \{x\in \bn :
|x|< \frac{1}{2}\}$,  $\varphi = 0$ in $B_{2} = \{x \in \bn : |x| \geq \frac{5}{6}\}$ and $0\leq \varphi \leq 1$.
Let $w_n= \varphi v_n, \tilde w_n= (1-\varphi) v_n $ , then $v_n=w_n + \tilde w_n $.
Multiplying \eqref{conc-eu} by $(1-\varphi)^2 v_n $ and integrating by parts we get 

$$ \int\limits_{\bn}[|\nabla \tilde w_n|^2-\tilde \lambda (\frac{2}{1- |x|^{2}})^{2} |\tilde w_n|^2] =
\int\limits_{\bn} |v_n|^{p_n-2}(\tilde w_n)^2 (\frac{2}{1-|x|^{2}})^{q_n} + \int\limits_{\bn}v_n^2|\nabla \varphi|^2$$ 
Using the estimate \eqref{est-ra} and applying dominated convergence theorem we easily see that the RHS converges to
$\int\limits_{\bn} |v|^{2^*-2}(\tilde w)^2 + \int\limits_{\bn}v^2|\nabla \varphi|^2,$
where $\tilde w= (1-\varphi) v $.\\ 
Multiplying \eqref{co-lt-eu} by $(1-\varphi)^2 v $ and integrating by parts we get 
$$\int\limits_{\bn}\left[|\nabla \tilde w|^2-\tilde \lambda \left(\frac{2}{1- |x|^{2}}\right)^{2} |\tilde w|^2\right] 
=\int\limits_{\bn} |v|^{2^*-2}(\tilde w)^2 + \int\limits_{\bn}v^2|\nabla \varphi|^2 $$
Thus 
$$ \int\limits_{\bn}\left[|\nabla \tilde w_n|^2-\tilde \lambda \left(\frac{2}{1- |x|^{2}}\right)^{2} |\tilde w_n|^2\right]
\rightarrow \int\limits_{\bn}\left[|\nabla \tilde w|^2-\tilde \lambda \left(\frac{2}{1- |x|^{2}}\right)^{2} |\tilde w|^2\right]$$
 and hence $ \tilde w_n \rightarrow  \tilde w$ in $H^{1}_{0}(\bn)$.
Therefore $w_n $ converges weakly to $w = \varphi v $ but not strongly.\\
Also $w_n \in H_{0}^{1}(B_{2}^{c})$ satisfies the equation

\begin{eqnarray*}
 -\Delta w_n - \tilde \lambda (\frac{2}{1- |x|^{2}})^{2} w_n = |v_n|^{p-2}w_n \left(\frac{2}{1-|x|^{2}}\right)^{q_n}
- 2 \langle \grad v_n, \grad \varphi \rangle
 - v_n \Delta \varphi ,
\end{eqnarray*}
Thus proceeding exactly as in Lemma 6.2 of \cite{So}, we see that up to a subsequence $w_n$ is a concentrating sequence. i.e.,there
 exists a positive integer 
$k$ and $\varphi_{i} \in D^{1,2}(\rn),i=1,...k$ satisfying $-\Delta \varphi_{i}=  |\varphi_{i}|^{2^*-2} \varphi_{i}$ , $k$ sequence of
positive real numbers $\epsilon_n^i$ and $y_n^i \in \bn,y_n^i \rightarrow 0$, $i=1,...,k$ such that 
 \begin{equation}\label{E:5.6}
 w_{n} - \sum_{i= 1}^{k}\varphi_{i,n}  \rightarrow w \quad \mbox{in} \quad L^{2^\ast}(\rn) 
\end{equation}
where $\varphi_{i,n}(x) = [\epsilon_{n}^{i}]^{\frac{2-N}{2}}\varphi_{i}(\frac{x-y_n^i}{\epsilon_{n}^{i}})$.
Moreover using (\ref{est-ra}) we see that that $ |w_n|$ 
(extended by zero out of $B_{2}^{c}$) solves 
\begin{equation}\label{E:5.5}
 -\Delta |w_n| \leq b  |w_n|^{2^{*}-1} + A
\end{equation}
in the sense of distributions where $b$ and $A$ are constants independent of $n$. i.e, $ |w_n|$ is a controlled sequence in the sense of
Solimini (\cite{So}).Thus if we let $\epsilon_n =\epsilon_n^i, y_n=y_n^i $ where $i$ is chosen such that 
$\limsup\limits_{n\rightarrow \I} \frac{\epsilon_n^j}{\epsilon_n^i} \le C$ for all $j=1,...,k,$ we have from Proposition 3.1 and 
Corollory 4.1 of \cite{So} 

\begin{lemma}\label{l:5.7}
 Then there exists a constant $C>0$ such that 
\begin{equation}\label{E:5.7}
 \sup\limits_{n\in \mathbb{N}}\;\max\limits_{(C+1)\sqrt{\epsilon_n}\le|x|\le (C+4)\sqrt{\epsilon_n}}|w_{n}(x)| < \infty
\end{equation}
 Moreover there 
exist $t_{n}\in \left[{C+2}, {C+3} \right ] $ such that,
\begin{equation}\label{E:5.8}
 \int_{\partial B_{n}} |\grad w_{n}|^{2} \leq C\epsilon_{n}^{\frac{N-3}{2}}
\end{equation}
where $B_{n} = B(y_n, t_{n}\epsilon_{n}^{1/2})$.
\end{lemma}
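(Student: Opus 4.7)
The proof will be a direct application of Solimini's analysis of controlled concentrating sequences. The key observation is that $|w_n|$ (extended by zero outside $B_2^c$) satisfies \eqref{E:5.5} and hence is a controlled sequence in Solimini's sense, while the decomposition \eqref{E:5.6} exhibits it as a concentrating sequence with bubble scales $\epsilon_n^j$ and centers $y_n^j$. Our choice of $\epsilon_n = \epsilon_n^i$ with $\limsup_{n} \epsilon_n^j/\epsilon_n \le C$ for all $j$ ensures that, after rescaling around $y_n$ at scale $\epsilon_n$, all the bubbles sit at scales of order one. Consequently the annulus $\{(C+1)\sqrt{\epsilon_n} \le |x-y_n| \le (C+4)\sqrt{\epsilon_n}\}$ corresponds, in rescaled coordinates, to $\{(C+1)/\sqrt{\epsilon_n} \le |y| \le (C+4)/\sqrt{\epsilon_n}\}$, which for $\epsilon_n$ small lies far outside the (bounded) region where the rescaled bubbles are concentrated.

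First I would establish the pointwise bound \eqref{E:5.7}. This is precisely the content of Proposition 3.1 of \cite{So}: a controlled concentrating sequence is uniformly bounded in $L^\infty$ outside a bounded neighborhood of its bubble centers in the rescaled variables. The proof there is a sharp Moser iteration adapted to the controlled inequality \eqref{E:5.5}, exploiting that the $L^{2^*}$-mass is essentially accounted for by the bubbles themselves, so outside the bubble regions one has arbitrarily small local $L^{2^*}$-norm for $n$ large, which is the threshold needed to start the iteration. Rescaling back recovers \eqref{E:5.7}.

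Next I would deduce \eqref{E:5.8}. From the pointwise bound \eqref{E:5.7} and the inequality \eqref{E:5.5}, standard interior elliptic (Calderon--Zygmund) estimates applied to the rescaled sequence yield a gradient bound $|\nabla w_n(x)| \le C/\sqrt{\epsilon_n}$ on a slightly shrunken annulus, say $\{(C+\tfrac{3}{2})\sqrt{\epsilon_n} \le |x-y_n| \le (C+\tfrac{7}{2})\sqrt{\epsilon_n}\}$, since the natural gradient scale at distance $\sqrt{\epsilon_n}$ is $1/\sqrt{\epsilon_n}$. The annulus $\{(C+2)\sqrt{\epsilon_n} \le |x-y_n| \le (C+3)\sqrt{\epsilon_n}\}$ has measure of order $\epsilon_n^{N/2}$, so the coarea formula together with averaging over $t \in [C+2, C+3]$ produces some $t_n$ in this interval with
\[
\int_{\partial B_n} |\nabla w_n|^2 \le \frac{1}{\sqrt{\epsilon_n}} \int_{(C+2)\sqrt{\epsilon_n} \le |x-y_n| \le (C+3)\sqrt{\epsilon_n}} |\nabla w_n|^2 \le C\, \epsilon_n^{(N-3)/2}.
\]
This averaging is the substance of Corollary 4.1 of \cite{So}.

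The principal obstacle is Step~1, the uniform pointwise bound on the annulus. In the presence of several concentrating bubbles it is delicate to obtain $L^\infty$-control away from the bubble centers, and this is exactly where Solimini's refined Moser iteration is needed; the argument must carefully exploit the controlled structure \eqref{E:5.5} together with the fact that, in the rescaled picture, the mass outside a large ball tends to zero. The dominance of $\epsilon_n$ among the scales $\epsilon_n^j$ is essential here: if there were a subsequential bubble with $\epsilon_n^j/\epsilon_n \to \infty$, its rescaled support would extend into our annulus and the uniform bound would fail.
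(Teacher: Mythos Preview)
Your proposal is correct and follows exactly the paper's approach: the paper likewise derives the lemma by invoking Proposition~3.1 and Corollary~4.1 of \cite{So}, using that $|w_n|$ is a controlled concentrating sequence and that $\epsilon_n$ dominates all bubble scales. One small imprecision: for the gradient bound in Step~2 you should appeal to the actual elliptic \emph{equation} satisfied by $w_n$ (which coincides with $v_n$ near the origin) rather than the one-sided inequality \eqref{E:5.5} for $|w_n|$, since Calder\'on--Zygmund requires an equation; with that adjustment your scaling computation yielding $\epsilon_n^{(N-3)/2}$ is exactly right.
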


With this estimate and a local Phozaev identity we will arrive at a contradiction.
First we will derive the local Pohozaev identity. Let us denote the outward normal to $\partial B_n$ by $\vec{n}$.

Multiplying \eqref{conc-eu} by $x.\grad v_n $, and  integrating by parts over $B_{n}$  we get, 
\begin{eqnarray}\label{E:5.11}
 &&\int_{B_n} \grad v_n.\grad (x.\grad v_n) - \tilde \la \int_{B_n}
 \left(\frac{2}{1-|x|^{2}}\right)^{2} v_n (x.\grad v_n) \nonumber \\
&&= \int_{B_n} |v_n|^{p_n-2} v_n \left( \frac{2}{1-|x|^{2}} \right)^{q_n} 
\end{eqnarray}
The RHS of (\ref{E:5.11}) can be simplified as
\begin{eqnarray}\label{E:5.12}
 &&\frac{1}{p_n}\int_{B_n} (\grad (|v_n|^{p_n}). x)
 \left( \frac{2}{1-|x|^{2}}\right)^{q_n}
= \frac{1}{p_n} \int_{\partial B_n} |v_n|^{p_n} \left(\frac{2}{1-|x|^{2}}\right)^{q_n} (x. \vec{n})\nonumber\\ 
&&- \frac{N}{p_n}\int_{B_n} |v_n|^{p_n} \left( \frac{2}{1-|x|^{2}}\right)^{q_n}
 - \frac{q_n}{p_n} \int_{B_n} |v_n|^{p_n} |x|^{2} \left( \frac{2}{1-|x|^{2}}\right)^{q_n + 1} 
\end{eqnarray}
By direct calculation and integration by parts, LHS of (\ref{E:5.11}) simplifies as 
\begin{eqnarray}\label{E:5.13}
 \mbox{LHS}  &&= -\int_{\partial B_n} (\grad v_n. x)(\grad v_n.\vec{n}) + \frac{1}{2}\int_{\partial B_n} |\grad v_n|^{2} (x.\vec{n}) 
+ \frac{2-N}{2} \int_{B_n} |\grad v_n|^{2}\nonumber\\
&& \frac{\tilde \lambda N}{2}\int_{B_n} \left(\frac{2}{1- |x|^{2}}\right)^{2} v_n^{2} + \tilde \lambda \int_{B_n}
\left(\frac{2}{1- |x|^{2}}\right)^{3} |x|^{2} v_n^{2} \nonumber\\
 &&- \frac{\tilde \lambda}{2} \int_{\partial B_n}  \left(\frac{2}{1- |x|^{2}}\right)^{2}v_n^{2}
(x. \vec{n}) 
\end{eqnarray}

Now from the equation \eqref{conc-eu} we have
\begin{eqnarray}\label{E:5.10}
 &&\int_{B_n} |\grad v_n|^{2} dx - \tilde \lambda \int_{B_n} \left(\frac{2}{1-|x|^{2}}\right)^{2} v_n^{2} dx \nonumber \\
&&= \int_{B_n} |v_n|^{p_n} \left(\frac{2}{1-|x|^{2}}\right)^{q_n} + \int_{\partial B} (\grad v .\vec{n}) v 
\end{eqnarray}

Substituting \eqref{E:5.12} and \eqref{E:5.13} in \eqref{E:5.11} and using \eqref{E:5.10}, we get

\begin{eqnarray}
&& \left(\frac{N}{p_n} - \frac{N}{2^{*}}\right) \int_{B_n} |v|^{p_n} \left( \frac{2}{1- |x|^{2}}\right)^{q_n} + \tilde \lambda \int_{B_n} 
\left( \frac{2}{1- |x|^{2}}\right)^{2} v_n^{2} = \nonumber\\
&&\frac{1}{p_n} \int_{\partial B_n}|v_n|^{p_n} \left( \frac{2}{1- |x|^{2}}\right)^{q_n}(x. \vec{n})
 + \frac{N}{2*}\int_{\partial B_n} (\grad v_n . \vec{n})v_n +   \nonumber\\
&&\int_{\partial B_n} (\grad v_n. x)(\grad v_n. \vec{n}) 
- \frac{1}{2}\int_{B_n} |\grad v_n|^{2} (x. \vec{n}) \nonumber\\
&& - \tilde \lambda \int_{B_n} \left( \frac{2}{1- |x|^{2}}\right)^{3} |x|^{2} v_n^{2}
+ \frac{\tilde \lambda}{2} 
\int_{\partial B_n} \left( \frac{2}{1- |x|^{2}}\right)^{2} v_n^{2} (x.\vec{n}) \nonumber\\
&&- \frac{q_n}{p_n} \int_{B_n}\left( \frac{2}{1- |x|^{2}}\right)^{q_n+1}
|v_n|^{p_n} |x|^{2} 
\end{eqnarray}
Ignoring the positive term on the left and the negative term on the right we get
\begin{eqnarray}\label{locpoh}
 &&\tilde \lambda \int_{B_n} \left( \frac{2}{1- |x|^{2}}\right)^{2} |v_n|^{2} \leq \frac{1}{p_n} 
\int_{\partial B_n}|v_n|^{p_n} \left( \frac{2}{1- |x|^{2}}\right)^{q_n}(x. \vec{n}) \nonumber\\
&& +\frac{N}{2*}\int_{\partial B_n} (\grad v_n . \vec{n})v_n 
+ \int_{\partial B_n} (\grad v_n. x)(\grad v_n. \vec{n}) - \frac{1}{2}\int_{\partial B_n} |\grad v_n|^{2} (x. \vec{n})\nonumber\\
&& + \frac{\tilde \lambda}{2} \int_{\partial B_n} \left(\frac{2}{1-|x|^{2}}\right)^{2} v_n^{2} (x. \vec{n})
\end{eqnarray}
Using Lemma \ref{l:5.7} we can easily show that the RHS of \eqref{locpoh} is less than or equal to $C_1 \epsilon_n^{\frac{N-2}{2}}$
for some $C_1$ independent of $n$.
Also using the decomposition \eqref{E:5.6} we can see that LHS$ \geq C_2\epsilon_n^2$. We omit the details as the proof is exactly 
the same as the one given in the proof of Lemma 6.1 in \cite{So}. Thus $\epsilon_n^2 \le C \epsilon_n^{\frac{N-2}{2}}$ where $C$ is 
independent of $n.$ This is impossible if $N\geq 7.$ This completes the proof of Theorem \ref{T:5.2} .
\section{Nonexistence}
In this section we will prove Theorem \ref{T:1.1}. The proof is based on the
Pohozaev identity. The difficulty of applying this identity 
is because of blowing up nature of the Riemmanian metric on the boundary of the
Hyperbolic ball model. So we need to have some decay estimate on the 
solution of the equation  (\ref{E:1.1}) to counter the blow up nature of the Hyperbolic metric 
on the boundary. We will use the asymptotic estimate derived in 
Section 2 to show the nonexistence of the solution.\\\\
First we will convert the equation of the Hyperbolic ball model to Euclidean Ball
model by  multiplying with the \emph{Conformal factor}. If 
$u$ solves $(\ref{E:1.1})$ , then $v = \left(\frac{2}{1 -
|x|^{2}}\right)^{\frac{N-2}{2}} u$ solves the Euclidean equation
\begin{equation}\label{E:3.1}
-\De v -\tilde \la  \left(\frac{2}{1-|x|^{2}}\right)^{2} v = |v|^{2^{*}-2} v,\quad v
\in H_{0}^{1}(\bn)
\end{equation}
where $\tilde \la =(\la - \frac{N(N-2)}{4})$.  
We will show that for $\tilde \la \leq 0$ i.e., $\la \leq \frac{N(N-2)}{4}$ has no
solution for the Eq.(\ref{E:3.1}). When $\tilde \la =0$ from the standard Pohozaev
identity we know that the equation has no solution. So it is 
enough to consider the case when $\tilde \la < 0$. Before proving the theorem we
will establish a gradient Esimate.\\

For $\var >0$ define  $A_{\var} := \{x\in \bn : 1-2\var <|x|< 1-\var \}$
\begin{proposition}\label{P:3.2}
 If $v$ satisfies Eq.(\ref{E:3.1}) with $\tilde \la \leq  0$, then 
\begin{equation}\label{E:3.2}
 \int_{A_{\var}} |\grad v|^{2} = O(\var^{\al})
\end{equation}
where $\al$ is a contant strictly greater than $1$ 
\end{proposition}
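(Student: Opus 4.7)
The plan is a Caccioppoli-type estimate combined with the pointwise bound from Theorem \ref{T:2.1}. Translating that estimate on $u$ through the conformal change $v = \bigl(\tfrac{2}{1-|x|^2}\bigr)^{(N-2)/2} u$ gives $|v(x)| \le C(1-|x|^2)^{\ba}$ on $\bn$ with $\ba = c_\la - (N-2)/2$. Since $c_\la = \min\bigl\{\tfrac{N-1+\sqrt{(N-1)^2-4\la}}{2},\tfrac{N+2}{2}\bigr\}$, a direct computation gives $\ba \geq 1$ with strict inequality $\ba > 1$ whenever $\la < N(N-2)/4$, i.e.\ whenever $\tilde \la < 0$.

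For the main step, fix $\var>0$ small and take a radial cutoff $\eta_\var$ with $\eta_\var \equiv 1$ on $A_\var$, supported in $\{1-3\var < |x| < 1\}$, and $|\grad \eta_\var| \le C/\var$. No upper cutoff is needed near $\pa \bn$, since $v \in H^{1}_0(\bn)$ already has vanishing trace, so $v\eta_\var^2 \in H^{1}_0(\bn)$ is an admissible test function in \eqref{E:3.1}. Expanding $\grad v \cdot \grad(v\eta_\var^2) = \eta_\var^2|\grad v|^2 + 2 v\eta_\var \grad v \cdot \grad \eta_\var$ and absorbing the cross term via Young's inequality yields
\[
\tfrac{1}{2}\int_{\bn} \eta_\var^2 |\grad v|^2 \leq \tilde \la \int_{\bn} \left(\tfrac{2}{1-|x|^2}\right)^{\!2} v^2 \eta_\var^2 + \int_{\bn} |v|^{2^*} \eta_\var^2 + 2\int_{\bn} v^2|\grad \eta_\var|^2.
\]
Because $\tilde \la \le 0$, the first term on the right is non-positive and can be discarded.

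Next, use the decay $|v(x)| \le C\var^{\ba}$ on $\mbox{supp}(\eta_\var)$ (where $1-|x|^2 \le 6\var$), together with $|\mbox{supp}(\eta_\var)| = O(\var)$ and $|\mbox{supp}(\grad \eta_\var)| = O(\var)$, to obtain
\[
\int_{\bn} |v|^{2^*} \eta_\var^2 \le C\, \var^{1 + 2^* \ba}, \qquad \int_{\bn} v^2 |\grad\eta_\var|^2 \le C\, \var^{2\ba - 1}.
\]
Since $\eta_\var \equiv 1$ on $A_\var$, this gives $\int_{A_\var}|\grad v|^2 \le C\var^{\al}$ with $\al = \min\{1+2^*\ba,\;2\ba-1\} = 2\ba-1$. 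The strict inequality $\ba>1$ then yields $\al>1$, as required.

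The only genuine obstacle is the borderline case $\tilde\la = 0$, where $\ba = 1$ exactly and the power count degenerates to $\al=1$. In that case, however, \eqref{E:3.1} reduces to the pure critical Euclidean problem $-\De v = |v|^{2^*-2}v$ on the unit ball with zero trace, and the classical Pohozaev identity forces $v\equiv 0$, rendering \eqref{E:3.2} trivial. The essential input is therefore the strict inequality $c_\la > N/2$ (equivalently $\ba>1$) furnished by Theorem \ref{T:2.1} whenever $\la < N(N-2)/4$.
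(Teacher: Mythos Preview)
Your argument is correct and follows essentially the same Caccioppoli-plus-pointwise-decay strategy as the paper. The differences are only cosmetic: the paper uses a cutoff $\psi_\var$ compactly supported in $\{1-3\var<|x|<1-\tfrac{\var}{2}\}$ and tests with $\psi_\var v$, then integrates the cross term by parts to produce $\De\psi_\var$, whereas you let the cutoff reach $\partial\bn$ (legitimate since $v\in H^1_0(\bn)$ and the Hardy-type inequality of Lemma~\ref{P:4.4} makes the $\tilde\la$-term integrable) and absorb the cross term via Young's inequality. Your explicit treatment of the borderline $\tilde\la=0$ is a welcome addition; the paper sidesteps it by disposing of that case separately at the start of the proof of Theorem~\ref{T:1.1}, so the proposition is effectively only invoked for $\tilde\la<0$.
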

\begin{proof}  For $\var > 0$ we define a smooth function

\[
\psi_{\var}(x) = 
 \begin{cases}
 1  &\text{if $1-2\var <|x| \leq 1 -\var$}\\
 0  &\text{if $|x|\in [ 1-3\var ,1-\frac{\var}{2}]^{c}$}\\
\end{cases}
\]
such that $|\De \psi_{\var}(x)| \leq \frac{c}{\var^{2}}$.\\[5pt]
Since $v$ is a solution to the Eq.(\ref{E:3.1}), then $v$ is smooth away 
from the boundary of the Euclidean Ball and hence $\psi_{\var} v \in
C_{c}^{2}(\bn)$.Multiplying Eq. (\ref{E:3.1}) by this test
function and integrating by parts, we get
\begin{equation}\label{E:3.3}
 \int_{\bn} \grad v \grad(\psi_{\var} v) - \tilde \la \int_{\bn}
\left(\frac{2}{1-|x|^{2}}\right)^{2} \psi_{\var} v^{2}
 = \int_{\bn} |v|^{2^{*}} \psi_{\var} 
\end{equation}

By expanding we have 
\begin{eqnarray*}
&& \int_{1-3\var < |x| < 1 - \frac{\var}{2}} \langle \grad v .\grad \psi_{\var}\rangle
v + \int_{1-3\var < |x| < 1 - \frac{\var}{2}}
|\grad v|^{2} \psi_{\var}\nonumber\\
 && \leq \int_{1-3\var < |x| < 1 - \frac{\var}{2}}
|v|^{2^{*}} \psi_{\var}
 + |\tilde \la| \int_{1-3\var < |x| < 1 -
\frac{\var}{2}}\left(\frac{2}{1-|x|^{2}}\right)^{2} \psi_{\var} v^{2}
\end{eqnarray*}
Rearranging the terms we have 
\begin{eqnarray*}
 &&\int_{1-3\var < |x| < 1 - \frac{\var}{2}}|\grad v|^{2} \psi_{\var}  \leq
\int_{1-3\var < |x| < 1 - \frac{\var}{2}} |v|^{2^{*}} \psi_{\var}\\
&& + |\tilde \la| \int_{1-3\var < |x| < 1 -
\frac{\var}{2}}\left(\frac{2}{1-|x|^{2}}\right)^{2} \psi_{\var} v^{2} 
- \int_{1-3\var < |x| < 1 - \frac{\var}{2}} \grad(\frac{1}{2}v^{2})\grad \psi_{\var}\\
=  &&  \int_{1-3\var < |x| < 1 - \frac{\var}{2}} |v|^{2^{*}} \psi_{\var}
+ |\tilde \la| \int_{1-3\var < |x| < 1 -
\frac{\var}{2}}\left(\frac{2}{1-|x|^{2}}\right)^{2} \psi_{\var} v^{2} \\
&& + \int_{1-3\var < |x| < 1 - \frac{\var}{2}}(\frac{1}{2}v^{2})\De \psi_{\var}
\end{eqnarray*}
Then clearly we have 
\begin{eqnarray*}
 \int_{A_{\var}} |\grad v|^{2} && \leq \int_{1-3\var < |x| < 1 - \frac{\var}{2}}
|v|^{2^{*}}
+ |\tilde \la| \int_{1-3\var < |x| < 1 -
\frac{\var}{2}}\left(\frac{2}{1-|x|^{2}}\right)^{2} v^{2}\\
&& + \frac{c}{\var^{2}} \int_{1-3\var < |x| < 1 - \frac{\var}{2}} v^{2}
\end{eqnarray*}
Now use the estimates on $v$ from Section 2 to conclude 
\[
 \int_{A_{\var}} |\grad v|^{2} \leq O(\var^{\al})
\]
where $\al > 1$.
\end{proof}
{\bf Proof of Theorem \ref{T:1.1}.} We will prove the theorem using the Pohozaev identity. To make the test
function Smooth we introduce cut-off
 functions so that we are away from the boundary and then pass to the limit with the
help of the asymptotic estimate proved.\\
 For $\var > 0$, we define
\[
\varphi_{\var}(x) = 
 \begin{cases}
 1  &\text{if $|x| \leq 1 -2\var$}\\
 0  &\text{if $|x| \geq 1-\var$}\\
\end{cases}
\]
Assume that (\ref{E:3.1}) has a nontrivial solution $v$, then $v$ is smooth away
from the boundary of the Euclidean Ball and hence 
$(x.\grad v)\varphi_{\var} \in C_{c}^{2}(\bn)$. Multiplying Eq. (\ref{E:3.1}) by
this test function and integrate by parts, we get
\[
 \int_{\bn} \grad v.\grad ((x.\grad v)\varphi_{\var}) + |\tilde \la| \int_{\bn}
\left(\frac{2}{1-|x|^{2}}\right)^{2} v (x.\grad v)\varphi_{\var}
\]
 \begin{equation}\label{E:3.4}
  = \int_{\bn} |v|^{2^{*}-2} (x.\grad v)\varphi_{\var}
 \end{equation}
Now the RHS of (\ref{E:3.4}) can be simplified as 
\begin{eqnarray*}
 \int_{\bn} |v|^{2^{*}-2}(x.\grad v)\varphi_{\var} && = \frac{1}{2^{*}} \int_{\bn}
\langle \grad (|v|^{2^{*}}).x\rangle \varphi_{\var}\nonumber\\
&& = -\frac{N}{2^{*}} \int_{\bn} |v|^{2^{*}} \varphi_{\var} - \frac{1}{2^{*}}
\int_{\bn} |v|^{2^{*}} [x.\grad \varphi_{\var}]
\end{eqnarray*}
Using the monotone convergence theorem we get 
\begin{equation}\label{E:3.5}
\lim_{\var \rightarrow 0}  -\frac{N}{2^{*}} \int_{\bn} |v|^{2^{*}} \varphi_{\var} =
-\frac{N}{2^{*}} \int_{\bn} |v|^{2^{*}}
\end{equation}
To estimate the 2nd term of RHS we need to use the estimate on $v$  for $\tilde \la
\leq 0$ which is given by
\begin{equation}\label{E:3.6}
|v(x)| \leq C_{1}[(1 - |x|^{2})^{\frac{1 + \sqrt{1 - 4 \tilde \la}}{2}} + (1
-|x|^{2})^{2}]
 \end{equation}
Now consider 
\begin{eqnarray*}
 &&\left| \frac{1}{2^{*}} \int_{\bn} |v|^{2^{*}} [x.\grad \varphi_{\var}]\right | 
\leq \frac{c}{\var} \int_{1-2\var<|x| < 1-\var} |v|^{2^{*}}\\
&& \leq \frac{c}{\var} \int_{1-2\var< |x| < 1-\var} \left[(1 - |x|)^{(\frac{1 +
\sqrt{1-4\tilde \la}}{2})2} + (1 -|x|)^{2^{*}2} \right]
 \leq  \frac{c}{\var} \quad \var^{\al}
\end{eqnarray*}
where $\al > 1$. Then letting $\var \rightarrow 0$  in the above we have
\begin{equation}\label{E:3.7}
 \left| \frac{1}{2^{*}} \int_{\bn} |v|^{2^{*}} [x.\grad \varphi_{\var}]\right |
\rightarrow 0
\end{equation}
Hence we have from (\ref{E:3.5}) and (\ref{E:3.7}),  
\begin{equation}\label{E:3.8}
 \lim_{\var \rightarrow 0} [RHS] = -\frac{N}{2^{*}} \int_{\bn} |v|^{2^{*}}
\end{equation}
By direct calculation and integration by parts, LHS of (\ref{E:3.4}) simplifies as 
\begin{eqnarray*}
&& \mbox{LHS}  = \int_{\bn} |\grad v|^{2} \varphi_{\var} + \sum_{i =1}^{N} 
\sum_{j=1}^{N} \frac{1}{2} \int_{\bn} (v_{x_{i}})^{2}_{x_{j}}
\varphi_{\var} x_{j} + \int_{\bn} \langle x.\grad v \rangle \langle \grad v .\grad
\varphi_{\var} \rangle\\
&& + 2 |\tilde \la| \int_{\bn} \left(\frac{2}{1-|x|^{2}}\right)^{2} v^{2}
\varphi_{\var} -  |\tilde \la| \int_{\bn} 
\left(\frac{2}{1-|x|^{2}}\right)^{3} v^{2} \varphi_{\var}  \\
&&- \frac{|\tilde \la|
N}{2} \int_{\bn} 
\left(\frac{2}{1-|x|^{2}}\right)^{2} v^{2} \varphi_{\var}
 - \frac{|\tilde \la|}{2} \int_{\bn} \langle x.\grad \varphi_{\var} \rangle 
\left(\frac{2}{1-|x|^{2}}\right)^{2} v^{2} \\
&& = - \frac{N-2}{2} \int_{\bn} |\grad v|^{2} \varphi_{\var} - \frac{1}{2}
\int_{\bn} |\grad v|^{2} \langle x. \grad \varphi_{\var} \rangle
+ \int_{\bn} \langle x.\grad v \rangle \langle \grad v .\grad \varphi_{\var} \rangle\\
&& + 2 |\tilde \la| \int_{\bn} \left(\frac{2}{1-|x|^{2}}\right)^{2} v^{2}
\varphi_{\var} -  |\tilde \la| \int_{\bn} 
\left(\frac{2}{1-|x|^{2}}\right)^{3} v^{2} \varphi_{\var}  \\
&&- \frac{|\tilde \la|
N}{2} \int_{\bn} 
\left(\frac{2}{1-|x|^{2}}\right)^{2} v^{2} \varphi_{\var}
 - \frac{|\tilde \la|}{2} \int_{\bn} \langle x.\grad \varphi_{\var} \rangle
\left(\frac{2}{1-|x|^{2}}\right)^{2} v^{2} 
\end{eqnarray*}
Using the monotone convergence theorem we get  
\begin{equation}\label{E:3.9}
  \lim_{\var \rightarrow 0 }- \frac{N-2}{2} \int_{\bn} |\grad v|^{2} \varphi_{\var}
= - \frac{N-2}{2} \int_{\bn} |\grad v|^{2}
\end{equation}
Using the monotone convergence theorem we get
\begin{equation}\label{E:3.10}
 \lim_{\var \rightarrow 0} 2 |\tilde \la| \int_{\bn}
\left(\frac{2}{1-|x|^{2}}\right)^{2} v^{2} \varphi_{\var} = 
2 |\tilde \la| \int_{\bn} \left(\frac{2}{1-|x|^{2}}\right)^{2} v^{2}
\end{equation}
Again using the monotone convergence theorem
\begin{equation}\label{E:3.11}
 \lim_{\var \rightarrow 0} - |\tilde \la| \int_{\bn}
\left(\frac{2}{1-|x|^{2}}\right)^{3} v^{2} \varphi_{\var}=
- |\tilde \la| \int_{\bn} \left(\frac{2}{1-|x|^{2}}\right)^{3} v^{2} 
\end{equation}
Similarly again using the monotone convergence theorem 
\begin{equation}\label{E:3.12}
 \lim_{\var \rightarrow 0} - \frac{|\tilde \la| N}{2}
\int_{\bn}\left(\frac{2}{1-|x|^{2}}\right)^{2} v^{2} \varphi_{\var} =
- \frac{|\tilde \la| N}{2} \int_{\bn}\left(\frac{2}{1-|x|^{2}}\right)^{2} v^{2}
\end{equation}

Now consider the term
\begin{eqnarray*}
 &&\left|- \frac{|\tilde \la|}{2} \int_{\bn} \langle x.\grad \varphi_{\var} \rangle
\left(\frac{2}{1-|x|^{2}}\right)^{2} v^{2} \right| \leq 
\frac{c}{\var} \int_{1-2\var < |x| < 1- \var} \left(\frac{2}{1- |x|^{2}}\right)^{2}
v^{2}\\
&& \leq \frac{c}{\var} \int_{1-2\var < |x| < 1- \var} \left(\frac{2}{1-
|x|}\right)^{2} [(1-|x|)^{1 + \sqrt{1 - 4\tilde \la}} +
(1 - |x|)^{4}]
 \leq \frac{c}{\var} \quad \var^{\al}
\end{eqnarray*}
where $\al > 1$. Then letting $\var \rightarrow 0$ in the above we get 
\begin{equation}\label{E:3.13}
 \left|- \frac{|\tilde \la|}{2} \int_{\bn} \langle x.\grad \varphi_{\var} \rangle
\left(\frac{2}{1-|x|^{2}}\right)^{2} v^{2} \right|
\rightarrow 0
\end{equation}
Now consider the remaining  term
\begin{eqnarray*}
 |\int_{\bn} \langle x.\grad v \rangle \langle \grad v .\grad \varphi_{\var}
\rangle| && \leq \int_{\bn} |\grad v|^{2} |\grad \varphi_{\var}|\\
&& \leq \frac{c}{\var} \int_{1-2\var < |x| < 1-\var} |\grad v|^{2}
\end{eqnarray*}
Hence by the gradient estimate the above term goes to zero as $\var \rightarrow 0$.\\
Using  (\ref{E:3.9}), (\ref{E:3.10}), (\ref{E:3.11}),  (\ref{E:3.12}) and
(\ref{E:3.13}) we have 
\begin{eqnarray}\label{E:3.14}
 &&\lim_{\var \rightarrow 0}\mbox{[LHS]} =  - \frac{N-2}{2} \int_{\bn} |\grad v|^{2}
+ 2 |\tilde \la| \int_{\bn}
 \left(\frac{2}{1-|x|^{2}}\right)^{2} v^{2}\nonumber\\
&& - |\tilde \la| \int_{\bn} \left(\frac{2}{1-|x|^{2}}\right)^{3} v^{2} -
\frac{|\tilde \la| N}{2} \int_{\bn}\left(\frac{2}
{1-|x|^{2}}\right)^{2} v^{2}
\end{eqnarray}
Substituting (\ref{E:3.8}) and (\ref{E:3.14}) in (\ref{E:3.4}), and using
Eq.(\ref{E:3.1}), we get
\begin{equation}\label{E:3.15}
 - 4 \tilde \la \int_{\bn} \frac{(1 + |x|^{2})}{(1- |x|^{2})^{3}} v^{2} = 0
\end{equation}
which implies $v = 0$.

\section{Existence}
In this section we will prove Theorem \ref{T:1.3}. In view of Corollory \ref{comp-sol}, it is enough to construct 
infinitely many solutions for subcritical problems which are bounded in $H^1_r(\bn)$. Infinitely many solutions for the
subcritical problem have been established in \cite{BS-2}, however we do not have any idea about their boundedness.
In this section we will prove the existence of sign changing solutions for the subcritical problem with an estimate on
 the Morse index from below by applying the abstract theorem of Schechter and Zou \cite{Sz}.\\\\
We fix a $p_{0} \in (2, 2^{*})$ and  choose a sequence $p_{n}$ in $(p_{0},2)$ such
that $p_{n} \rightarrow 2^{*}$. 
Consider the problem
\begin{equation}\label{E:4.1}
 -\De_{\bn} u = \la u + |u|^{p_{n}-2}u ,\quad u \in H_{r}^{1}(\bn)
\end{equation}
then we have :
\begin{theorem}\label{T:4.1}  Fix $\la \in (\frac{N(N-2)}{4}, (\frac{N-1}{2})^{2})$, then for every
$n$ the Equation (\ref{E:4.1}) has infinitely many radial sign changing solutions $\{u_k^n\}_{k=1}^\infty$
such that for each $k$, the sequence $\{u_k^n\}_{n=1}^\infty$ is bounded in $H_{r}^{1}(\bn)$ and the augmented Morse index
of $u_k^n$ on the space $H_{r}^{1}(\bn)$ is greater then or equal to $k$.
\end{theorem}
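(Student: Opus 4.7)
The plan is to apply the abstract symmetric linking theorem of Schechter--Zou \cite{Sz}, which, under suitable geometric and compactness hypotheses on an even $C^{1}$-functional, produces infinitely many sign-changing critical points together with lower bounds on their augmented Morse index. The natural energy functional associated with \eqref{E:4.1} is
$$
I_{n}(u) = \frac{1}{2}\int_{\bn}\left(|\grad_{\bn} u|^{2} - \la u^{2}\right) dV_{\bn} - \frac{1}{p_{n}}\int_{\bn}|u|^{p_{n}}\, dV_{\bn}, \qquad u \in H^{1}_{r}(\bn).
$$
Since $\la < (\tfrac{N-1}{2})^{2}$ and the hyperbolic Poincar\'e constant is exactly $(\tfrac{N-1}{2})^{2}$, the quadratic part is positive definite and defines a norm $\|\cdot\|_{\la}$ equivalent to the $H^{1}$-norm. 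For each fixed $n$, $2<p_{n}<2^{*}$, and the radial compact embedding $H^{1}_{r}(\bn)\hookrightarrow L^{p_{n}}(\bn)$ (exploited in \cite{BS-2}) delivers the Palais--Smale condition at every level.

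First I would verify the geometric hypotheses required by Schechter--Zou: the mountain-pass geometry ($I_{n}(0)=0$, $I_{n}\ge \alpha>0$ on a small $\|\cdot\|_{\la}$-sphere, and unboundedness below on every finite-dimensional subspace) follows immediately from $p_{n}>2$ and the Sobolev inequality, while the decomposition needed to separate sign-changing from one-signed critical points uses the positive cone $P\subset H^{1}_{r}(\bn)$ of nonnegative functions and the fact that $P$ is invariant under the natural pseudo-gradient flow modulo error terms that can be controlled since $-\De_{\bn}-\la$ has purely positive spectrum. Applying the abstract theorem then yields, for each $k\ge 1$, a sign-changing critical point $u_{k}^{n}$ of $I_{n}$ at a minimax level $c_{k}^{n}$, whose augmented Morse index in $H^{1}_{r}(\bn)$ is at least $k$.

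The remaining and decisive task is the $n$-uniform bound on $\|u_{k}^{n}\|_{\la}$. I would fix, once and for all, a $k$-dimensional subspace $W_{k}\subset H^{1}_{r}(\bn)$ consisting of smooth, compactly supported radial functions, chosen independently of $n$. On $W_{k}$ the functional $I_{n}$ goes to $-\infty$ as $\|u\|_{\la}\to\infty$, uniformly in $n$, because $p_{n}\ge p_{0}>2$ forces the $|u|^{p_{n}}$ term to dominate the quadratic part with a uniform rate; hence $\max_{W_{k}}I_{n}\le M_{k}$ for some $M_{k}$ independent of $n$. Feeding $W_{k}$ into the Schechter--Zou minimax class forces $c_{k}^{n}\le M_{k}$. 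Combining this with the Nehari identity
$$
I_{n}(u_{k}^{n}) - \frac{1}{p_{n}}\langle I_{n}'(u_{k}^{n}),u_{k}^{n}\rangle = \left(\frac{1}{2}-\frac{1}{p_{n}}\right)\|u_{k}^{n}\|_{\la}^{2}
$$
and using $I_{n}'(u_{k}^{n})=0$ yields $\|u_{k}^{n}\|_{\la}^{2}\le \frac{2p_{n}}{p_{n}-2}M_{k}$, and since $p_{n}$ stays bounded away from $2$ this gives the required uniform bound.

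The main obstacle I expect is the bookkeeping within the Schechter--Zou framework: one must verify that the minimax class built from $W_{k}$ is genuinely linked in the sense of that paper, and simultaneously compatible with the positive/negative cone condition that forces the critical points to be sign-changing rather than one-signed. This amounts to checking the abstract hypotheses of \cite{Sz} in our concrete setting, which is technical but conceptually routine once the functional-analytic setup is in place.
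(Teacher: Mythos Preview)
Your proposal is correct and follows essentially the same route as the paper: both apply the Schechter--Zou abstract theorem, verify the cone-invariance condition for the positive cone $P$ so that the resulting critical points are sign-changing with augmented Morse index at least $k$, and then obtain the $n$-uniform $H^{1}$ bound by estimating the minimax level from above via a fixed finite-dimensional subspace and using the Nehari-type identity. The only cosmetic difference is that the paper first passes to the Euclidean conformal model and uses the Dirichlet eigenspaces $E_{k}$ of $-\Delta$ (which forces it to handle an additional linear piece $L$ in the gradient decomposition, controlled via the contraction factor $4\tilde\lambda<1$), whereas you work directly on $H^{1}_{r}(\bn)$ with the $\|\cdot\|_{\lambda}$ norm, so that the operator $K_{n}$ is purely superlinear and the cone-invariance check is in fact slightly simpler.
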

To prove the theorem we have to show that the functional
\[
 J_{n,\la}(u) = \frac{1}{2} \int_{\bn} |\grad_{\bn} u|^{2} dV_{\bn}  - \frac{\la}{2}
\int_{\bn} u^{2}dV_{\bn} - \frac{1}{p_{n}} \int_{\bn}
|u|^{p_{n}}dV_{\bn} 
\]
defined on $H^1(\bn)$ has infinitely many critical points $\{u_k^n\}_{n=1}^\infty$. Because of the principle of
symmetric criticality (\cite{P}), enough to find the critical points of $J_{n,\la}$ on $H_{r}^{1}(\bn)$. The
augmented Morse index of $u_k^n$ on the space $H_{r}^{1}(\bn)$ is the dimention of the largest subspace of $H_{r}^{1}(\bn)$
where $J_{n,\la}^{\prime\prime}(u_k^n)$ is nonpositive definite.
 We will prove the theorem by working with its conformal version \eqref{conc-eu}. We will show that
the functional, 
\[
 G_{n,\la}(v) = \frac{1}{2} \int_{\bn} |\grad v|^{2}  - \frac{\tilde \la}{2}
\int_{\bn} \left(\frac{2}{1 -|x|^{2}}\right)^{2} v^{2} 
- \frac{1}{p_{n}} \int_{\bn}
|v|^{p_{n}}\left(\frac{2}{1-|x|^{2}}\right)^{q_{n}} 
\]
defined on $H^1_{0,r}(\bn)$ satisfies all the assumptions of Theorem 2 of \cite{Sz} where $\tilde \la$ is 
as in \eqref{conc-eu}  .\\\\
 Let $0 < \la_{1} < \la_{2}
\leq \la_{3}\ldots \leq \la_{k}\leq \ldots$ be the eigen values of $-\De$ on $H^1_{0,r}(\bn)$ and
$\varphi_{k}(x)$ be the eigen functions corresponding
to $\la_{k}$. Denote $E_{k} := \mbox{span}\{\varphi_{1},\varphi_{2},\ldots ,
\varphi_{k}\}$. Then $H^1_{0,r}(\bn)= \overline{\cup_{k=1}^{\I} E_{k}}$, dim$E_{k} =k$ and $E_{k} \subset E_{k+1}$.\\ 
For each $p_{n}\in (2, 2^{*})$, we define 
$$||u||_{*} = \left[\int_{\bn}|u|^{p_{n}}\left(\frac{2}{1-|x|^{2}}\right)^{q_{n}}\right]^{\frac{1}{p_{n}}}, u \in H^1_{0,r}(\bn)$$ 
then from \eqref{PSE-C} we get $||v||_{*} \leq C ||v||$ for all $v \in H_{0,r}^{1}(\bn)$ for some constant $C>0$. Moreover using the radial estimate the 
embedding of $H^1_{0,r}(\bn)$ in to $(H^1_{0,r}(\bn),||.||_{*}) $ is compact.\\
Now define,
$$P:= \{v \in H_{0,r}^{1}(\bn) : v\geq 0\}$$ Also for $\mu > 0$, define 
$$D(\mu) := \{v\in H^1_{0,r}(\bn): \mbox{dist}(v,P) < \mu\} ,\;D^{*}
:= D(\mu) \cup (-D(\mu)).$$ Also denote the set of all critical points by
$$K_{n}^{\la}:= \{v\in H_{0,r}^{1}(\bn): G'_{n,\la}(v) = 0\}$$
Clealy $G_{n,\la} \in C^{2}((H_{0}^{1}, ||.||), \mathbb{R})$
 is an even functional which maps bounded sets to bounded sets in terms of the norm $||.||$. The gradient $G'_{n,\la}$ is of the form 
$G'_{n,\la}(v) = v - K_{n,\la}(v)$, where $K_{n,\la} : E \rightarrow E$ is a
continuous operator. Moreover
\begin{proposition}\label{P:4.5}
 For any $\mu_{0} > 0$ small enough, we have that $K_{n,\la}(D(\mu_{0}))\subset
D(\mu) \subset D(\mu_{0})$ for some $\mu \in (0,\mu_{0})$ for 
each $n,\la$  with $ \frac{N(N-2)}{4}<\la <\frac{(N-1)^{2}}{4}$. Moreover,
$D(\mu_{0}) \cap K_{n}^{\la} \subset P$
\end{proposition}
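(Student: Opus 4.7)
The plan is to invoke the classical invariant-set technique: to show that the nonlinear operator $K_{n,\la}$ strictly contracts the natural gauge $v\mapsto \|v^-\|$ for $v$ sufficiently close to $P$, and then to exploit that $\mbox{dist}(v,P)\le \|v^-\|$ so the conclusions about $D(\mu)$ follow. Concretely, given $v\in H_{0,r}^{1}(\bn)$ set $w=K_{n,\la}(v)$, characterised by
\[
 \langle w,\va\rangle = \tilde\la \int_{\bn}\left(\tfrac{2}{1-|x|^{2}}\right)^{2}v\,\va + \int_{\bn}|v|^{p_{n}-2}v\left(\tfrac{2}{1-|x|^{2}}\right)^{q_{n}}\va,\qquad \forall\,\va\in H_{0,r}^{1}(\bn).
\]
Testing with $\va=-w^{-}$ produces $\|w^{-}\|^{2}$ on the left. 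Writing $v=v^{+}-v^{-}$ and dropping the (favourable) nonnegative contributions from $v^{+}$ gives the key inequality
\[
 \|w^{-}\|^{2} \le \tilde\la \int_{\bn}\left(\tfrac{2}{1-|x|^{2}}\right)^{2}v^{-}w^{-} + \int_{\bn}(v^{-})^{p_{n}-1}\left(\tfrac{2}{1-|x|^{2}}\right)^{q_{n}}w^{-}.
\]

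The linear term is handled via the coercivity coming from the spectral gap of $-\De_{\bn}$. Because $\la<((N-1)/2)^{2}$, the bottom-of-spectrum inequality $\int_{\bn}|\grad_{\bn}u|^{2}\,dV_{\bn}\ge((N-1)/2)^{2}\int_{\bn}u^{2}\,dV_{\bn}$ combined with the conformal identity linking \eqref{E:1.1} and \eqref{E:3.1} yields a gap $\de_{1}=1-4\la/(N-1)^{2}>0$ with
\[
 \tilde\la \int_{\bn}\left(\tfrac{2}{1-|x|^{2}}\right)^{2}u^{2} \le (1-\de_{1})\|u\|^{2}\qquad \forall\,u\in H_{0,r}^{1}(\bn).
\]
Weighted Cauchy--Schwarz then bounds the first integral by $(1-\de_{1})\|v^{-}\|\,\|w^{-}\|$. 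For the nonlinear term, Hölder with exponents $p_{n}/(p_{n}-1)$ and $p_{n}$, together with the embedding $\|\cdot\|_{*}\le C\|\cdot\|$, yields a bound $C_{0}\|v^{-}\|^{p_{n}-1}\|w^{-}\|$; the constant $C_{0}$ can be taken independent of $n$ by continuity of the embedding constant on the compact range $[p_{0},2^{*}]$.

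Dividing by $\|w^{-}\|$ produces
\[
 \|w^{-}\| \le (1-\de_{1})\|v^{-}\| + C_{0}\|v^{-}\|^{p_{n}-1}.
\]
Choose $\mu_{0}\in(0,1)$ small enough that $C_{0}\mu_{0}^{p_{0}-2}<\de_{1}/2$; since $\mu_{0}<1$ and $p_{n}\ge p_{0}$ imply $\mu_{0}^{p_{n}-2}\le \mu_{0}^{p_{0}-2}$, whenever $\|v^{-}\|<\mu_{0}$ one has $\|w^{-}\|\le (1-\de_{1}/2)\|v^{-}\|$. Setting $\mu=(1-\de_{1}/2)\mu_{0}<\mu_{0}$ gives the inclusion $K_{n,\la}(D(\mu_{0}))\subset D(\mu)\subset D(\mu_{0})$.

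For the final assertion, if $v\in D(\mu_{0})\cap K_{n}^{\la}$ then $v=K_{n,\la}(v)=w$, so the same estimate forces $\|v^{-}\|\le (1-\de_{1}/2)\|v^{-}\|$, whence $v^{-}\equiv 0$ and $v\in P$. The main technical hurdle is establishing the coercivity with a strictly positive gap $\de_{1}$, uniformly over $n$ and independent of the particular $p_{n}$; once this is in place the rest is a routine contraction argument and a choice of $\mu_{0}$ that absorbs the nonlinear perturbation.
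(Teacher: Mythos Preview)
Your overall strategy is sound and very close to the paper's, but there is a genuine gap in the passage from the contraction inequality to the inclusion $K_{n,\la}(D(\mu_0))\subset D(\mu)$. You correctly derive
\[
 \|w^-\| \le (1-\de_1)\|v^-\| + C_0\|v^-\|^{p_n-1},
\]
and you correctly note that $\mbox{dist}(w,P)\le \|w^-\|$. The problem is on the input side: membership $v\in D(\mu_0)$ means $\mbox{dist}(v,P)<\mu_0$, and the inequality you invoke, $\mbox{dist}(v,P)\le \|v^-\|$, goes the \emph{wrong way}. In the $H^1$ norm the projection of $v$ onto $P$ is in general \emph{not} $v^+$ (it is the solution of an obstacle problem), so one does not have $\|v^-\|\le \mbox{dist}(v,P)$, and the hypothesis $v\in D(\mu_0)$ gives no control of $\|v^-\|$. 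The same issue reappears in your argument for $D(\mu_0)\cap K_n^\la\subset P$.

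The fix is small but essential, and it is exactly where the paper's proof differs from yours. For the nonlinear term, stop at the H\"older step and keep $\|v^-\|_*$ rather than passing to $\|v^-\|$: since $|v^-|\le |v-u|$ pointwise for every $u\in P$, one has $\|v^-\|_*\le \|v-u\|_*\le C\|v-u\|$ and hence $\|v^-\|_*\le C\,\mbox{dist}(v,P)$. For the linear term the analogous weighted-$L^2$ argument gives $\tilde\la\int(\tfrac{2}{1-|x|^2})^2 v^-w^-\le 4\tilde\la\,\mbox{dist}(v,P)\,\|w^-\|$. With these two replacements your inequality becomes
\[
 \mbox{dist}(w,P)\le \|w^-\|\le 4\tilde\la\,\mbox{dist}(v,P)+C\,[\mbox{dist}(v,P)]^{p_n-1},
\]
and now the contraction genuinely acts on $\mbox{dist}(\cdot,P)$, yielding both conclusions. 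The paper reaches the same endpoint by first splitting $K_{n,\la}=L+W$, using linearity together with $L(P)\subset P$ (maximum principle) to get $\mbox{dist}(Lv,P)\le 4\tilde\la\,\mbox{dist}(v,P)$ directly, and handling $W$ via the $\|\cdot\|_*$ estimate above; your single-shot test with $-w^-$ is a legitimate alternative once the right norms are kept on the $v^-$ side.
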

\begin{proof} First note that $K_{n,\la}(v)$ can be decomposed as $K_{n,\la}(v) = L(v) + W(v)$
where $L(v), W(v) \in E$ are the unique solutions of the equations
\[
 -\De (L(v))= \tilde \la v \left(\frac{2}{1-|x|^{2}}\right)^{2} \mbox{and} \quad
-\De(W (v)) = |v|^{p_{n}-2} v \left(\frac{2}{1-|x|^{2}}\right)^
{q_n}.
\]
In other words, $L(v)$ and $W(v)$ are uniquely determine by the relations
\begin{equation}\label{E:4.3}
 \langle Lv, u\rangle = \tilde \la \int_{\bn} u v
\left(\frac{2}{1-|x|^{2}}\right)^{2}\;,\;  \langle W(v),u \rangle = 
\int_{\bn} |v|^{p_{n}-2} v u \left(\frac{2}{1-|x|^{2}}\right)^{q_n}
\end{equation}
By Maximum Principle, $L(v)\in P$ and $W(v)\in P$ if $v\in P$.\\
Now we will estimate $||L(v)||$. We have
\begin{eqnarray*}
\langle Lv, Lv \rangle && = \tilde \la \int_{\bn} v Lv
\left(\frac{2}{1-|x|^{2}}\right)^{2} dx\\
&& \leq \tilde \la \left(\int_{\bn} \left(\frac{2}{1-|x|^{2}}\right)^{2}
v^{2}\right)^{\frac{1}{2}}  
\left(\int_{\bn} \left(\frac{2}{1-|x|^{2}}\right)^{2} |Lv|^{2}\right)^{\frac{1}{2}}\\
&& \leq 4 \tilde \la ||v|| ||Lv||
\end{eqnarray*}
thanks to Lemma (\ref{P:4.4}). Thus
$||Lv|| \le 4 \tilde \la ||v|| $
where $4\tilde \la < 1$. Let $v\in H_{0,r}^{1}(\bn)$ and $u\in P$ be such that
dist$(v,P) = ||u-v||$, then 
\begin{equation}\label{E:4.4}
 \mbox{dist}(Lv,P) \leq ||Lv-Lu|| \leq 4 \tilde \la||u-v|| \leq 4 \tilde \la \mbox{dist}(v,P)
\end{equation}
where $4 \tilde \la < 1$.\\
To estimate the distance between $W(v)$ and $P$,set $v^{-}:= \mbox{min}\{ v,0\}$. Then
\begin{eqnarray*}
&&\mbox{dist} (W(v),P) ||W(v)^{-}||  \leq ||W(v)^{-}||^{2} \leq \langle W(v), W(v)^{-} \rangle\\
&& = \int_{\bn} |v|^{p_{n}-2} v W(v)^{-}
\left(\frac{2}{1-|x|^{2}}\right)^{q_n} 
 \leq \int_{\bn} |v^{-}|^{p_{n}-1} |W(v)^{-}| 
\left(\frac{2}{1-|x|^{2}}\right)^{q_n}\\
&& \leq \left( \int_{\bn} |v^{-}|^{p_{n}} 
\left(\frac{2}{1-|x|^{2}}\right)^{q_n}\right)^{\frac{p_n-1}{p_n}}
 \left( \int_{\bn} |W(v)^{-}|^{p_{n}} 
\left(\frac{2}{1-|x|^{2}}\right)^{q_n}\right)^{\frac{1}{p_n}}\\
&&\leq C \left( \int_{\bn} |v^{-}|^{p_{n}} 
\left(\frac{2}{1-|x|^{2}}\right)^{q_n}\right)^{\frac{p_n-1}{p_n}}||W(v)^{-}||
\end{eqnarray*}
Now using 
$$ \int_{\bn} |v^{-}|^{p_{n}} 
\left(\frac{2}{1-|x|^{2}}\right)^{q_n} = \min\limits_{u\in P}\int_{\bn} |v-u|^{p_{n}} 
\left(\frac{2}{1-|x|^{2}}\right)^{q_n}\leq C\min\limits_{u\in P}||v-u||$$
we get
\[
 \mbox{dist} (W(v),P) \leq C[\mbox{dist}(v,P)]^{p_{n}-1} 
\quad \forall v\in H_{0}^{1}(\bn)
\]
Choose $4 \tilde \la < \nu < 1$. Then there exists $\mu_{0}$ such that, if $\mu \leq
\mu_{0}$,
\begin{equation}\label{E:4.5}
 \mbox{dist}(W(v),P) \leq (\nu - 4 \tilde \la) \mbox{dist}(v,P) \quad \mbox{for
all}\quad  v\in D(\mu).
\end{equation}
 Fix $\mu \leq \mu_{0}$. Inequalities (\ref{E:4.4}) and (\ref{E:4.5}) yeild
\begin{eqnarray*}
 \mbox{dist}(K_{n,\la}(v),P) && \leq \mbox{dist}(L(v),P) + \mbox{dist} (W(v),P)\\
&& \leq \nu \mbox{dist} (v,P)
\end{eqnarray*}
for all $v\in D(\mu)$. This proves of the Proposition.
\end{proof}

\begin{proposition}\label{P:4.7}
 For each k, $\lim_{||v||\rightarrow \I, v\in E_{k}} G_{n,\la}(v) = -\I$
\end{proposition}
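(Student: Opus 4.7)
The plan is to exploit the fact that $E_k$ is a finite-dimensional subspace of $H^1_{0,r}(\bn)$, so all norms on $E_k$ are equivalent, while the functional $G_{n,\la}$ has a dominant negative term of order $\|v\|_*^{p_n}$ with $p_n>2$.

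First I would discard the quadratic lower-order term. Since $\tilde\la = \la - \frac{N(N-2)}{4} > 0$ under the hypothesis $\la > \frac{N(N-2)}{4}$, the middle term $-\frac{\tilde\la}{2}\int_{\bn}\left(\frac{2}{1-|x|^2}\right)^2 v^2$ is non-positive, so we may drop it to obtain the upper bound
\begin{equation*}
G_{n,\la}(v) \le \tfrac{1}{2}\|v\|^2 - \tfrac{1}{p_n}\|v\|_*^{p_n},
\end{equation*}
where $\|v\|_*^{p_n} = \int_{\bn}|v|^{p_n}\left(\frac{2}{1-|x|^2}\right)^{q_n}$ is the weighted $L^{p_n}$-quantity appearing in the nonlinear term.

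Next I would use that on the finite-dimensional space $E_k$ the functional $v\mapsto \|v\|_*$ is a continuous norm (continuity follows from the embedding inequality $\|v\|_*\le C\|v\|$ already recorded in the paper, and positivity is clear since $\|v\|_* = 0$ forces $v\equiv 0$). All norms on a finite-dimensional vector space being equivalent, there is a constant $c_k>0$ (depending on $k$ and $n$) such that $\|v\|_* \ge c_k\|v\|$ for every $v\in E_k$. Substituting into the previous inequality yields
\begin{equation*}
G_{n,\la}(v)\le \tfrac{1}{2}\|v\|^2 - \tfrac{c_k^{p_n}}{p_n}\|v\|^{p_n}\qquad \forall\,v\in E_k.
\end{equation*}
Since $p_n>2$, the right-hand side tends to $-\I$ as $\|v\|\to\I$, completing the proof.

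There is no real obstacle here: the only point that deserves a line of justification is the equivalence of $\|\cdot\|_*$ and $\|\cdot\|$ on $E_k$, which reduces to the fact that $\|\cdot\|_*$ is a nondegenerate norm on $H^1_{0,r}(\bn)$ (the weight $\left(\frac{2}{1-|x|^2}\right)^{q_n}$ is strictly positive on $\bn$). Everything else is a one-line application of the superlinearity $p_n>2$.
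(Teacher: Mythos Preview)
Your proof is correct and follows essentially the same route as the paper: drop the non-positive middle term (using $\tilde\la>0$), invoke equivalence of the norms $\|\cdot\|$ and $\|\cdot\|_*$ on the finite-dimensional space $E_k$, and conclude from $p_n>2$. The paper's argument is identical, just more terse.
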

Proof: Since $E_k$ is finite dimensional, there exists a constant $C>0$ such that $||v|| \leq C ||v||_{*}$ 
for all $ v \in E_k.$ Thus
\[
 G_{n,\la}(v)  \leq \frac{1}{2} ||v||^{2} - C ||v||^{p_{n}} ,\;\forall \;  v \in E_k
\]
Since $p_{n} > 2$, we have $\lim_{||v||\rightarrow \I, v\in E_{k}} G_{n,\la}(v) = -\I$.
\begin{proposition}\label{P:4.8}
 For any $\al_{1}, \al_{2} > 0$, there exist an $\al_{3}$ depending on $\al_{1}$ and
$\al_{2}$ such that $||v||\leq \al_{3}$ for all
$v\in G_{n,\la}^{\al_{1}} \cap \{ v\in H_{0}^{1}(\bn): ||v||_{*} \leq \al_{2}\}$  where
$G_{n,\la}^{\al_{1}} = \{ v\in H_{0,r}^{1}(\bn): G_{n,\la} \leq \al_{1}\}$ 
\end{proposition}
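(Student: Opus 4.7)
\textbf{Proof proposal for Proposition \ref{P:4.8}.} The plan is to extract coercivity for the quadratic part of $G_{n,\la}$ from the subcritical spectral assumption $\la<\left(\frac{N-1}{2}\right)^{2}$, and then absorb the nonlinear term by means of the standing bound $\|v\|_{*}\leq \al_2$. Rewrite
\[
G_{n,\la}(v) \;=\; \frac{1}{2}\left[\|v\|^{2} - \tilde\la \int_{\bn}\left(\frac{2}{1-|x|^{2}}\right)^{2} v^{2}\,dx\right] \;-\; \frac{1}{p_n}\|v\|_{*}^{p_n},
\]
so that controlling $\|v\|$ reduces to bounding the bracketed quadratic form from below by a positive multiple of $\|v\|^{2}$.

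The first step is to invoke Lemma \ref{P:4.4} (the same embedding already used in the proof of Proposition \ref{P:4.5}), which provides the weighted bound
\[
\int_{\bn}\left(\frac{2}{1-|x|^{2}}\right)^{2} v^{2}\,dx \;\leq\; 4\,\|v\|^{2} \qquad \forall\,v\in H^{1}_{0,r}(\bn).
\]
A short computation shows the hypothesis $\la<\left(\frac{N-1}{2}\right)^{2}$ is equivalent to $\tilde\la<\tfrac{1}{4}$, because $\left(\frac{N-1}{2}\right)^{2}-\frac{N(N-2)}{4}=\frac{1}{4}$. Consequently
\[
\|v\|^{2} - \tilde\la \int_{\bn}\left(\frac{2}{1-|x|^{2}}\right)^{2} v^{2}\,dx \;\geq\; (1-4\tilde\la)\,\|v\|^{2} \;=:\; c\,\|v\|^{2}
\]
with $c>0$ depending only on $N$ and $\la$, and in particular independent of $n$.

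Combining this coercivity estimate with the two hypotheses $G_{n,\la}(v)\leq \al_1$ and $\|v\|_{*}\leq \al_2$ yields
\[
\al_1 \;\geq\; G_{n,\la}(v) \;\geq\; \frac{c}{2}\|v\|^{2} - \frac{\al_2^{p_n}}{p_n},
\]
whence $\|v\|^{2} \leq \frac{2}{c}\bigl(\al_1 + \al_2^{p_n}/p_n\bigr)$. Since $p_n\in (p_0,2^{*})$ is bounded away from both $0$ and $\infty$, the quantity $\al_2^{p_n}/p_n$ admits an upper bound depending only on $\al_2$ and $p_0$, so one may take $\al_3 := \sqrt{\tfrac{2}{c}(\al_1 + \tfrac{1}{p_0}\max\{1,\al_2^{2^{*}}\})}$, yielding the claim uniformly in $n$. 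I do not anticipate any real obstacle: the argument is essentially linear in structure, and the strict inequality $\tilde\la<\tfrac{1}{4}$ produces a positive coercivity constant precisely in the range of $\la$ prescribed in the section.
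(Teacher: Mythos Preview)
Your proof is correct and follows essentially the same route as the paper: the paper's entire argument is the one-line inequality $\frac{1-4\tilde\la}{2}\|v\|^{2}\le G_{n,\la}(v)+\frac{1}{p_n}\|v\|_{*}^{p_n}$, which is exactly the coercivity you derive from Lemma \ref{P:4.4} together with $\tilde\la<\tfrac14$. Your additional care in making the bound uniform in $n$ is a welcome refinement of what the paper leaves implicit.
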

\begin{proof} The proposition follows since
 $$ \frac{1-4\tilde \la}{2}||v||^2\le G_{n,\la}(v) + \frac{1}{p_n}||v||_{*} ^{p_n}$$
\end{proof}

{\bf Proof of Theorem \ref{T:4.1}} The above discussions and Propositions \ref{P:4.5}, \ref{P:4.7} \ref{P:4.8} tells us 
that $G_{n,\la} $ satisfies all the conditions of Theorem 2 in \cite{Sz}. Thus $G_{n,\la} $ has a sign changing critical point
 $v_k^n\in H_{0,r}^{1}(\bn)$ at a level $C(n,\la,k)$ and $C(n,\la,k) \leq \mbox{sup}_{E_{k+1}} G_{n,\la}$ and the augmented Morse
index $m^{*}(v_k^n)$ of $v_k^n$ is $\geq k$. We claim that:\\
{\it Claim :} There exists a constant $T_{1} > 0$ independent of $k$ and $n$ such that 
\[
 \mbox{sup}_{E_{k+1}} G_{n,\la} \leq T_{1} \la_{k+1}^{\frac{p_{0}}{2(p_{0} -2)}}
\]
{\it Proof of claim :}  The defination of $E_{k+1}$
implies that $||v||^{2}\leq  \la_{k+1} ||v||^{2}_{2}$. Note that 
with $p_{n} > p_{0}$, we have $||v||_{p_{0}} \leq D_{1} ||v||_{p_{n}}$, where $D_{1}
> 0$ is a constant independent of $n$ and $k$. Therefore,
\begin{eqnarray*}
 G_{n,\la}(v) && \leq \frac{1}{2} \int_{\bn} |\grad v|^{2}  - \frac{\tilde \la}{2}
\int_{\bn} \left(\frac{2}{1 -|x|^{2}}\right)^{2} v^{2} 
- \frac{1}{p_{n}} \int_{\bn} |v|^{p_{n}}\\
&& \leq  \frac{1}{2} \int_{\bn} |\grad v|^{2}   - \frac{1}{p_{n}} \int_{\bn}
|v|^{p_{n}}\\
&& \leq  \frac{1}{2} \int_{\bn} |\grad v|^{2} -D_{2} \int_{\bn} |v|^{p_{0}} + D_{3}
\end{eqnarray*}
where $D_{2}>0, D_{3} > 0$ are constant, independent of $n$ and $k$. Since there
exist a constant $D_{4} > 0$ such that $||v||_{2} \leq D_{4} ||v||
_{p_{0}}$, therefore we may have $D_{5} > 0$ such that $||v||^{p_{0}} \leq D_{5}
\la_{k+1}^{p_{0}/2} ||v||_{p_{0}}^{p_{0}}$ for all 
$v \in E_{k+1}$. Then 
\begin{eqnarray*}
 G_{n,\la}(v) && \leq \frac{1}{2} ||v||^{2} - D_{6}\la_{k+1}^{-p_{0}/2}
||v||^{p_{0}} + D_{3}\\
&& \leq D_{7}\la_{k+1}^{\frac{p_{0}}{2(p_{0} -2)}} + D_{3}\\
&& \leq T_{1}\la_{k+1}^{\frac{p_{0}}{2(p_{0} -2)}}
\end{eqnarray*}
where $D_{i}(i=1, \ldots ,7)$ and $T_{1}$ are positive constants independent of $k$
and $n$.\\\\
Also note that energy of any critical point of $G_{n,\la}$ is positive. Thus 
$G_{n,\la}(v_k^n) \in [0,T_{1} \la_{k+1}^{\frac{p_{0}}{2(p_{0} -2)}} ].$
This immediately implies that the sequence $\{v_k^n\}_{n=1}^\infty$ is bounded in $H_0^{1}(\bn)$ for each $k.$ Now
$u_k^n= (\frac{1-|x|^2}{2})^{\frac{N-2}{2}}v_k^n$ satisfies all the conclusions
of Theorem \ref{T:4.1}. Moreover $J_{n,\la}(u_k^n) = G_{n,\la}(v_k^n)$. \\

{\bf Proof of Theorem \ref{T:1.3}}. Using Corollory \ref{comp-sol} and Theorem \ref{T:4.1}, we get
a sequence $\{u_k\}_{k=1}^\infty$ of solutions of our original problem with energy  
$C(\la,k) \in [0,T_{1} \la_{k+1}^{\frac{p_{0}}{2(p_{0} -2)}}]$. It remains 
to show that infinitely many 
$u_k$'s are different. This folllows if we show that the energy of $u_k$ goes to infinity as $k\rightarrow \infty.$\\
Suppose not,
then $\lim_{k\rightarrow \I}C(\la,k) = c'< \I$.
For any $k\in \mathbb{N}$ we may find an $n_{k}$(assume $n_{k} > k$) such that
$|C(n_{k,\la,k}) - C(\la,k)| < \frac{1}{k}$. It follows
 that $\lim_{k\rightarrow \I} C(n_{k},\la, k) = \lim_{k\rightarrow \I}
C(\la,k) = c'<\I$. Hence, 
 $\{u^{n_{k}}_k\}_{k\in \mathbb{N}}$ is bounded in $H^1_r(\bn)$ and hence satisfies the uniform bound
given by Theorem \ref{T:5.2}. Therefore the augmented Morse index of $u^{n_k}_k$ remains bounded which contradicts the fact that 
the augmented Morse index of $u^{n_k}_k$ is greater than or equal to $k$.
 Thus $\lim_{k\rightarrow \I}C(\la,k) = \I$ and hence infinitely many $u_k$'s are different. Moreover they are sign changing as 
the radial positive solutions are unique (see \cite{MS}, Theorem 1.3). This completes the proof.  
\section{Appendix}

Let $\bn:=\{x\in\rn: |x|<1\}$ denotes the unit disc in $\rn$. The space $\bn$ endowed 
with the Riemannian metric $g$ given by $g_{ij}=(\frac{2}{1-|x|^2})^2\de_{ij}$ is called
the ball model of the Hyperbolic space. For more details on hyperbolic geometry we refer to \cite{JR}.\\\\ 
We will denote the associated hyperbolic 
volume by $dV_{\bn}$ and is given by  $dV_{\bn}=(\frac{2}{1-|x|^2})^N dx$.
The hyperbolic gradient $\na_{\bn}$ and the hyperbolic Laplacian $\De_{\bn}$ are given by
$$
 \na_{\bn}=(\frac{1-|x|^2}{2})^2\na,\ \ \  \De_{\bn}=(\frac{1-|x|^2}{2})^2\De+(N-2)\frac{1-|x|^2}{2}<x,\na>
$$
Let $H^1(\bn)$ denotes the Sobolev space on $\bn$ with the above metric $g$, then we have
$H^1(\bn) \hookrightarrow L^p(\bn)$ for $2\le p\le \frac{2N}{N-2}$ when $N\ge 3$ and
$p\ge 2$ when $N=2$. In fact we have the following Poincar\'{e}-Sobolev inequality (see \cite{MS}):\\\\
For every $N \geq 3$ and every $p\in (2, \frac{2N}{N-2}] $
there is an optimal constant \\
$S_{N,p,\la }>0$ such that
\begin{equation}\label{PSE}
 S_{N,p,\la} \left(\int\limits_{\bn}  |u|^{p} dV_{\bn} \right)^{\frac{2}{p}} \leq 
\int\limits_{\bn} \left[ |\nabla_{\bn}  u|^2  - {\frac{(n-1)^2 }{4}} u^2 \right]  dV_{\bn} \quad 
\end{equation}
for every $ u\in H^1(\bn).$\\\\
As an immediate consequence we get :\\
\begin{lemma}\label{PSE-l} For any $\la < {\frac{(n-1)^2 }{4}}$, $||u||_{\la}$ defined by
$$||u||_{\la}^2 \;:=\; \int\limits_{\bn} \left[ |\nabla_{\bn}  u|^2  - \la u^2 \right]dV_{\bn} $$ 
is an equivalent norm in $H^1(\bn)$. 
\end{lemma}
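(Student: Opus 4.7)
The plan is to establish the two-sided estimate $c\,\|u\|_{H^1(\bn)}^2 \leq \|u\|_\la^2 \leq C\,\|u\|_{H^1(\bn)}^2$ for constants $0<c\leq C$ depending only on $N$ and $\la$, which immediately yields the equivalence of norms (and en passant shows that $\|\cdot\|_\la$ really is a norm, since positive definiteness follows from the lower bound). The upper bound is trivial: bounding $-\la$ by $|\la|$,
\[
\|u\|_\la^2 \;=\; \int_{\bn}|\na_{\bn} u|^2\,dV_{\bn} - \la \int_{\bn} u^2\,dV_{\bn} \;\leq\; \max(1,|\la|)\,\|u\|_{H^1(\bn)}^2.
\]

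For the lower bound, the key ingredient is the hyperbolic Hardy--Poincar\'e inequality
\[
\frac{(N-1)^2}{4}\int_{\bn} u^2\,dV_{\bn} \;\leq\; \int_{\bn}|\na_{\bn} u|^2\,dV_{\bn}, \qquad u\in H^1(\bn),
\]
which is an immediate by-product of \eqref{PSE}: since the LHS of \eqref{PSE} is non-negative, so must be its RHS $\int_{\bn}[|\na_{\bn} u|^2 - \frac{(N-1)^2}{4} u^2]\,dV_{\bn}$. Given this, I would pick $\theta \in [0,1)$ satisfying $\la \leq \theta\cdot\frac{(N-1)^2}{4}$ --- possible precisely because $\la < \frac{(N-1)^2}{4}$ (take $\theta=0$ if $\la\leq 0$, and $\theta = \frac{4\la}{(N-1)^2}$ otherwise) --- and split
\[
\|u\|_\la^2 \;=\; (1-\theta)\!\!\int_{\bn}\!|\na_{\bn} u|^2 dV_{\bn} + \Bigl[\theta\!\!\int_{\bn}\!|\na_{\bn} u|^2 dV_{\bn} - \la\!\!\int_{\bn}\! u^2 dV_{\bn}\Bigr] \;\geq\; (1-\theta)\!\!\int_{\bn}\!|\na_{\bn} u|^2 dV_{\bn},
\]
where the bracketed term is non-negative by the choice of $\theta$ together with the Hardy--Poincar\'e inequality.

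A second application of Hardy--Poincar\'e, $\int_{\bn} u^2\,dV_{\bn} \leq \frac{4}{(N-1)^2}\int_{\bn}|\na_{\bn} u|^2\,dV_{\bn}$, then gives $\|u\|_{H^1(\bn)}^2 \leq \bigl(1+\tfrac{4}{(N-1)^2}\bigr)\int_{\bn}|\na_{\bn} u|^2\,dV_{\bn}$, which combined with the previous display produces the desired lower bound with $c = (1-\theta)\bigl(1+\tfrac{4}{(N-1)^2}\bigr)^{-1}$. There is no real obstacle here --- \eqref{PSE} does essentially all of the work; the only care needed is the minor case-split on the sign of $\la$, and the observation that the strict inequality $\la < \frac{(N-1)^2}{4}$ is exactly what allows us to choose $\theta < 1$ and thereby get a strictly positive constant $c$.
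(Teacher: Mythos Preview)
Your argument is correct and is exactly the kind of computation the paper has in mind: the authors state the lemma as ``an immediate consequence'' of the Poincar\'e--Sobolev inequality \eqref{PSE} without writing out a proof, and your derivation---extracting the hyperbolic Poincar\'e inequality from the non-negativity of the right-hand side of \eqref{PSE} and then interpolating via a parameter $\theta$---is the standard way to make that implication explicit.
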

Making a conformal change of the metric, we can get a Euclidean version of \eqref{PSE} on the Euclidean
Sobolev space $H^1_0{(\bn)}.$
\begin{lemma}\label{P:4.4} There is an optimal constant 
$S_{N,p,\la }>0$ such that
\begin{equation}\label{PSE-C}
 S_{N,p,\la} \left(\int\limits_{\bn}  |v|^{p} (\frac{2}{1-|x|^2})^q dx\right)^{\frac{2}{p}} \leq 
\int\limits_{\bn} \left[ |\nabla  v|^2  - {\frac{1}{4}}  (\frac{2}{1-|x|^2})^2 v^2\right]  dx \quad 
\end{equation}
for every $ v\in H^1_0(\bn)$ where $p\in (2, \frac{2N}{N-2}] $ and $q= \frac{2N-p(N-2)}{2}.$
\end{lemma}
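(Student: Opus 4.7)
The plan is to derive \eqref{PSE-C} as a direct consequence of the hyperbolic Poincar\'e--Sobolev inequality \eqref{PSE} via the conformal change $v(x)=\left(\frac{2}{1-|x|^2}\right)^{(N-2)/2}u(x)$, which is precisely the change of variable already used throughout Sections 2 and 4 of the paper. This map is a bijection between $H^1(\bn)$ (with the hyperbolic metric $g$) and $H^1_0(\bn)$ (the Euclidean Sobolev space), since the power $\tfrac{N-2}{2}$ is the critical conformal weight. Given this, it suffices to rewrite each of the three integrals appearing in \eqref{PSE} in terms of $v$.

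The $L^p$ and $L^2$ terms are immediate from $dV_{\bn}=\left(\frac{2}{1-|x|^2}\right)^N dx$ combined with the definition of $v$:
\begin{equation*}
\int_{\bn}|u|^p\,dV_{\bn}=\int_{\bn}|v|^p\left(\frac{2}{1-|x|^2}\right)^{q}dx,\qquad \int_{\bn}u^2\,dV_{\bn}=\int_{\bn}\left(\frac{2}{1-|x|^2}\right)^{2}v^2\,dx,
\end{equation*}
with $q=\frac{2N-p(N-2)}{2}$. The non-trivial identity is the one for the Dirichlet integral,
\begin{equation*}
\int_{\bn}|\na_{\bn}u|^2\,dV_{\bn}=\int_{\bn}|\na v|^2\,dx+\frac{N(N-2)}{4}\int_{\bn}\left(\frac{2}{1-|x|^2}\right)^{2}v^2\,dx,
\end{equation*}
which is the heart of the argument. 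To obtain it I would set $w(x)=\left(\frac{2}{1-|x|^2}\right)^{(N-2)/2}$, expand $|\na v|^2=w^2|\na u|^2+2uw\,\na u\cdot\na w+u^2|\na w|^2$, observe that $w^2|\na u|^2 dx$ agrees with $|\na_{\bn}u|^2\,dV_{\bn}$ (by the metric formulas in the Appendix), and integrate the cross term by parts to collapse the remaining contribution to $-\int u^2 w\,\De w\,dx$. A short direct computation gives $\De w=\frac{N(N-2)}{4}\left(\frac{2}{1-|x|^2}\right)^{(N+2)/2}$, so that $-\int u^2 w\,\De w\,dx=-\frac{N(N-2)}{4}\int v^2\left(\frac{2}{1-|x|^2}\right)^{2}dx$, which yields the displayed identity.

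Substituting the three identities into \eqref{PSE} gives
\begin{equation*}
S_{N,p,\la}\left(\int_{\bn}|v|^p\left(\frac{2}{1-|x|^2}\right)^q dx\right)^{2/p}\le \int_{\bn}|\na v|^2\,dx-\left[\frac{(N-1)^2}{4}-\frac{N(N-2)}{4}\right]\int_{\bn}\left(\frac{2}{1-|x|^2}\right)^{2}v^2\,dx,
\end{equation*}
and the algebraic identity $(N-1)^2-N(N-2)=1$ collapses the bracket to $\tfrac14$, giving exactly \eqref{PSE-C}. The only delicate point is justifying the integration by parts for a general $v\in H^1_0(\bn)$: this is handled by first performing the computation for $v\in C_c^{\infty}(\bn)$ (no boundary terms arise) and then passing to the limit using density and the fact that both sides depend continuously on $v$ in the $H^1_0(\bn)$ norm (the weighted $L^2$ term is controlled via the Hardy-type bound $\int\left(\frac{2}{1-|x|^2}\right)^2 v^2\,dx\le 16\int|\na v|^2\,dx$ coming from the $N=1$ case of the computation above applied with test functions). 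This is the only step I expect to require careful bookkeeping; the remainder is a conformal identity.
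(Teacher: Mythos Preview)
Your proposal is correct and follows exactly the approach of the paper: the paper's proof consists of the single line ``Put $u=\left(\frac{2}{1-|x|^2}\right)^{-\frac{N-2}{2}}v$ in \eqref{PSE},'' and you have simply filled in the details of that substitution (the conformal identity for the Dirichlet integral, the computation of $\Delta w$, and the algebraic simplification $(N-1)^2-N(N-2)=1$). Your extra care about justifying the integration by parts for general $v\in H^1_0(\bn)$ via density is fine, though the parenthetical about an ``$N=1$ case'' is a bit garbled; the Hardy-type control of the weighted $L^2$ term follows directly from the inequality \eqref{PSE-C} itself (or equivalently from the hyperbolic Poincar\'e inequality) once established on $C_c^\infty(\bn)$.
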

\begin{proof} Put $u = \left(\frac{2}{1 - |x|^{2}}\right)^{-\frac{N-2}{2}} v$ in \eqref{PSE} will
establish the lemma.
\end{proof}

\end{document}